\def\A{\mathsf{A}}
\def\M{\mathcal M}
\def\cG{\mathsf{FEM}}
\newcommand{\dif}{\,\text{d}}
\renewcommand{\phi}{\varphi}
\renewcommand{\P}{\Upsilon^{r_m}_{m}}
\newcommand{\timejump}[1]{\lbrack\!\lbrack#1\rbrack\!\rbrack} 
\DeclareMathOperator{\diam}{diam}
\DeclareRobustCommand\widecheck[1]{{\mathpalette\@widecheck{#1}}}
\def\@widecheck#1#2{%
    \setbox\z@\hbox{\m@th$#1#2$}%
    \setbox\tw@\hbox{\m@th$#1%
       \widehat{%
          \vrule\@width\z@\@height\ht\z@
          \vrule\@height\z@\@width\wd\z@}$}%
    \dp\tw@-\ht\z@
    \@tempdima\ht\z@ \advance\@tempdima2\ht\tw@ \divide\@tempdima\thr@@
    \setbox\tw@\hbox{%
       \raise\@tempdima\hbox{\scalebox{1}[-1]{\lower\@tempdima\box
\tw@}}}%
    {\ooalign{\box\tw@ \cr \box\z@}}}
\newtheorem{Remark}[equation]{Remark}
\newenvironment{remark}{\begin{Remark}\rm}{\end{Remark}}
\newtheorem{theorem}[equation]{Theorem}
\newtheorem{assumption}[equation]{Assumption}
\newtheorem{proposition}[equation]{Proposition}
\newtheorem{corollary}[equation]{Corollary}
\newtheorem{lemma}[equation]{Lemma}
\numberwithin{equation}{section}
\newcommand{\C}{\mathrm{C}}
\newcommand{\W}{\mathrm{W}}
\renewcommand{\H}{\mathrm{H}}
\renewcommand{\P}{\mathbb{P}}
\newcommand{\diff}{\kappa}
\newcommand{\Ltwo}{\mathrm{L}^2}
\newcommand{\Linf}{\mathrm{L}^\infty}
\newcommand{\bigcdot}{\bullet}
\title[Conditional high order error bounds for heat blow-up models]{%
Conditional a posteriori error bounds\\ 
for high order DG time stepping approximations\\
of semilinear heat models with blow-up
}
\author{Stephen Metcalfe}
\address{Dept. of Mechanical Engineering, McGill University, Montr{\'e}al, H3A 0C3, Canada}
\email{smetcalfephd@gmail.com}
\author{Thomas P. Wihler}
\address{Mathematisches Institut, Universit\"at Bern, Sidlerstr.~5, CH-3012 Bern, Switzerland}
\email{thomas.wihler@math.unibe.ch}
\thanks{The authors acknowledge the support of the Swiss National Science Foundation (SNF) grant \#200021-162990. In addition, the results in this paper made use of the facilities of Compute Canada and Calcul Qu{\'e}bec, specifically, the B{\'e}luga supercomputer.}
\begin{document}

\begin{abstract}
This work is concerned with the development of an adaptive numerical method for semilinear heat flow models featuring a general (possibly) nonlinear reaction term that may cause the solution to blow up in finite time. The fully discrete scheme consists of a high order discontinuous Galerkin (dG) time stepping method and a conforming finite element discretisation (cG) in space. The proposed adaptive procedure is based on rigorously devised \emph{conditional a posteriori error bounds} in the $\Linf(\Linf)$ norm. Numerical experiments complement the theoretical results.
\end{abstract}

\keywords{Semilinear heat equation, variable order dG time stepping methods, conditional \emph{a posteriori} error estimates, temporal and elliptic reconstructions, blow-up singularities}

\subjclass[2010]{65J08, 65L05, 65L60}

\maketitle

\section{Introduction}

Let $\displaystyle \Omega \subset \mathbb{R}^d$ with $d=1$, $2$ or $3$ be a bounded polyhedral domain and consider the initial boundary value problem
\begin{equation}\label{model_strong}
\begin{aligned}
u_t - \diff\Delta{u}&=f(u)  \qquad && \text{in }  \Omega, \mbox{ } t > 0  \mbox{,} \\ u &=0 \mbox{ } && \text{on }  \partial\Omega, \mbox{ } t > 0 \mbox{,} \\ u(\cdot,0) &=u_0 \mbox{ } && \text{in } \overline{\Omega}\mbox{,}
\end{aligned}
\end{equation}
where $\diff>0$ is a constant diffusion coefficient and $u_0 \in \W^{2, \infty}(\Omega)$ is the initial condition with $u_0 |_{\partial \Omega} = 0$. We assume that the reaction term $f:\overline{\Omega} \times \mathbb{R}_0^+\times \mathbb{R}\to \mathbb{R}$ is continuously differentiable and satisfies the \emph{local} Lipschitz estimate
\begin{equation}\label{eq:Lip}
|f(x,t,v)-f(x,t,w)| \leq  \mathfrak{L}(t,|v|,|w|)|v-w| \quad \forall x \in \overline{\Omega} \quad \forall t \in \mathbb{R}_0^+ \quad \forall v,w \in \mathbb{R}.
\end{equation}
Here, $\mathfrak{L}:\mathbb{R}_0^+\times \mathbb{R}^+_0 \times \mathbb{R}^{+}_0 \to  \mathbb{R}^{+}_0$ is a \emph{known} function that satisfies $\mathfrak{L}(\cdot,a,b) \in \mathrm{L}^1_{\text{loc}}(\mathbb{R}_0^+)$ for any $a,b \in \mathbb{R}^+_0$, and that is continuous and monotone increasing in the second and third arguments. This condition on $f$ is quite general and includes many nonlinearities of interest, for example, it covers any polynomial nonlinearity with suitably regular coefficients as well as nonlinearities of exponential type \cite{KMW16}.

If the reaction term $f$ features sufficient growth, and if the initial data $u_0$ possesses enough energy, then it is known that \eqref{model_strong} exhibits \emph{finite time blow-up}, that is, there exists a maximal time of existence $T_\infty < \infty$ called the \emph{blow-up time} such that \eqref{model_strong} holds and
\[
\|u(t)\|_{\Linf(\Omega)}<\infty\text{ for }0<t<T_\infty,\qquad \qquad \lim_{t\nearrow T_\infty}\|u(t)\|_{\Linf(\Omega)} = \infty,
\]
see, e.g., \cite{Hu} and the references presented therein. If the solution to \eqref{model_strong} does not exhibit finite time blow-up then the solution is global and $T_{\infty} = \infty$.  Either way, we assume that \eqref{model_strong} has a solution on the time interval $(0,T_\infty)$. In fact, we can show that the model problem \eqref{model_strong} has a unique local solution $u \in \Linf(\Linf)$ provided that an implicit \emph{local a posteriori criterion} is satisfied and that this local criterion is well behaved with respect to the distance from the blow-up time.

\emph{A posteriori} error estimators for linear problems are \emph{unconditional} in the sense that the error bounds always hold independently of the discretisation parameters and the problem data. By contrast, \emph{a posteriori} error estimators for nonlinear problems are often \emph{conditional}, that is, the error bounds only hold under the provision that some \emph{a posteriori} verifiable condition is fulfilled. Most of the conditional estimates in the literature are \emph{explicit} \cite{CGS20, B05,CM08,FV03, LN05,MN06,KNS04,K09,KM11,GM14} in the sense that the estimates only hold under conditions of an explicit nature involving the magnitude of the numerical solution, the discretisation parameters and/or the problem data. Recently, there has been interest in the derivation of \emph{implicit} conditional estimates, cf. \cite{KMW16,IM17,M15,CGKM16}, where estimates only hold under conditions that involve the above listed arguments in an implicit manner.For nonlinear time-dependent problems, there are two commonly used approaches for deriving conditional \emph{a posteriori} error bounds: continuation arguments, cf. \cite{B05, M15,KMW16,CGKM16, KNS04,GM14}, and fixed point arguments, cf. \cite{KM11, CM08, K09, IM17}.

A particular asset of deriving conditional \emph{a posteriori} error bounds for blow-up problems is that they can only hold in the pre-blowup phase when the error remains bounded by a finite quantity. If such estimators are robust with respect to the distance from the blow-up time then, when appropriately combined with a sensible adaptive strategy, we can potentially guide the numerical solution process accurately towards the blow-up time. First attempts to derive such error bounds for blow-up problems were made in \cite{K09,KM11} whereby explicit conditional $\Linf(\Linf)$ \emph{a posteriori} error bounds have been derived for \eqref{model_strong} via semi-group theory. These early conditional estimates, however, are not well suited for the practical computation of blow-up problems due to the conditions being of an explicit rather than implicit nature. In view of more practical error bounds for blow-up problems, which involve an implicit condition, a major advancement was presented in \cite{CGKM16,M15} via an energy argument and with the aid of the Gagliardo-Nirenberg inequality for an analysis in the $\Ltwo(\H^1)$ norm; the use of energy techniques to derive the conditional \emph{a posteriori} error bounds in those works also allows for the consideration of non-symmetric spatial operators which is currently out of reach for derivations based on the semi-group methodology. Nevertheless, the bounds of \cite{CGKM16,M15} still possess certain disadvantages: convergence to the blow-up time is slower than was anticipated; moreover, the use of an energy argument means that the range of possible nonlinearities is restricted to those which scale like polynomials of degree up to~3.  Recently, significant progress has been made by combining the implicit condition approach pioneered in \cite{CGKM16,M15} with the original idea of deriving error bounds for blow-up problems via semi-group techniques first presented in\cite{K09,KM11}; this has led to the derivation in \cite{IM17} of an implicit conditional $\Linf(\Linf)$ \emph{a posteriori} error bound for a first order in time implicit-explicit (IMEX) discretisation of \eqref{model_strong}. By combining these approaches in \cite{IM17}, the range of possible nonlinearities that can be considered has been significantly broadened and the rate of convergence to the blow-up time improved from that observed in \cite{CGKM16,M15}. We remark that other $\Linf(\Linf)$ \emph{a posteriori} error bounds for \eqref{model_strong} also exist including those of \cite{KL16, KL17} which focus on the singularly perturbed case and those of \cite{DLM09} which focus on the linear case.

The primary goal of the current paper is to extend the low order IMEX analysis in \cite{IM17} through the incorporation of the high order Galerkin time stepping framework of \cite{KMW16} in order to produce an implicit conditional \emph{a posteriori} error bound for arbitrarily high order  discretisations of \eqref{model_strong}. In particular, the variable order framework allows us to resolve the underlying solution quite efficiently by employing larger time steps and approximation orders in the pre-blowup regime (where the solution is smooth and spectral convergence can be exploited) before transitioning to low order approximations on shorter time intervals close to the blow-up time; this is inspired by the $hp$-version dG time stepping approach for linear parabolic problems with start-up singularities originally presented in \cite{SS00}. Galerkin discretisations of initial value problems are based on weak formulations where the test spaces consist of polynomials that are discontinuous at the time nodes. In this way, the discrete Galerkin formulations decouple into local problems on each time step and the discretisations can therefore be understood as implicit one-step schemes. In the literature, Galerkin time stepping schemes have been extensively analyzed for ODEs, cf. ~\cite{BR03,DD86,DHT81,E95,EF94,J88}. In addition to our proposed method of approximating the solution to \eqref{model_strong} close to the blow-up time through \emph{a posteriori} error estimation, other numerical methods are available; of particular prominence are the rescaling algorithm of Berger and Kohn \cite{BK88,NZ16} and the MMPDE method \cite{BHR96,HMR08}. We also note the classical work of Stuart and Floater \cite{SF90} which deals with the numerical approximation of blow-up in ODEs.

\subsection*{Outline} In \S\ref{sc:disc}, we introduce necessary notation and present the dG-cG discretisation of \eqref{model_strong} which consists of discontinuous Galerkin (dG) time stepping combined with the conforming finite element method (cG) in space. In \S\ref{sc:rec} we present temporal and spatial reconstructions which are crucial for the derivation of our conditional \emph{a posteriori} error bound in \S\ref{sc:erroranalysis}. In \S\ref{sc:adaptive}, we discuss an adaptive algorithm which can exploit the conditional \emph{a posteriori} error bound of the previous section to direct the numerical solution towards the blow-up time. This adaptive algorithm is then applied to several test problems in \S\ref{sc:num} and we are even able to obtain exponential convergence results with an $hp$-adaptive strategy. Finally, we draw conclusions and discuss possible directions for future research in \S\ref{sc:concl}.

\section{Galerkin Discretisation} \label{sc:disc}

\subsection{Spatial Discretisation}

Consider a shape-regular mesh $\mathcal{T}=\{K\}$ of the domain $\Omega$ into open elements $K$ of diameter $h_K:=\diam(K)$ that are constructed via affine mappings $F_{K}:\widehat{K}\to K$, with non-singular Jacobian, where $\widehat{K}$ is the $d$-dimensional reference simplex or cube. The mesh is allowed to contain a uniformly fixed number of regular hanging nodes per face. Given two meshes $\mathcal{T}_1$ and $\mathcal{T}_2$, we denote their \emph{coarsest common refinement} by $\mathcal{T}_1 \vee \mathcal{T}_2$.

With these definitions, the finite element space $\mathbb{V}_{\cG}(\mathcal{T})$ over the mesh $\mathcal{T}$ is given by
\begin{equation}\label{eq:FEspace}
\mathbb{V}_{\cG}(\mathcal{T}) := \{v \in  \H^1_0(\Omega) : v|_K\circ F_K \in \mathbb{S}^p(\widehat{K}), \, K \in \mathcal{T} \},
\end{equation}
where $\displaystyle\mathbb{S}^p(\widehat{K})$ denotes the space $\displaystyle\P^p(\widehat K)$ of all polynomials of total degree $p$ if $\widehat{K}$ is the $d$-dimensional reference simplex, or the space $\displaystyle\mathbb{Q}^p(\widehat K)$ of all polynomials of degree $p$ in each variable if $\widehat{K}$ is the $d$-dimensional  reference cube. Here, $\displaystyle\H^1_0(\Omega)$ is the Sobolev space of functions with weak gradient in $\Ltwo(\Omega)$ and zero trace on $\partial\Omega$.

For a given function $g \in \C^0({\Omega})$,
suppose that the elliptic problem
\begin{equation}\label{eq:ell}
\begin{aligned}
-\diff\Delta{w} &= g \quad\text{in }\Omega,\qquad
w =0 \quad\text{on }  \partial\Omega,
\end{aligned}
\end{equation}
has a unique solution $\displaystyle w \in \H^1_0(\Omega)\!\cap\! \C^0(\overline{\Omega})$. The conforming finite element approximation of $w$, $w_{\cG}\in\mathbb{V}_{\cG}(\mathcal{T})$, is given by the solution of
\begin{equation}\label{eq:FEM}
\diff(\nabla w_{\cG},\nabla v_{\cG})=(g,v_{\cG})\qquad\forall v_{\cG}\in\mathbb{V}_{\cG}(\mathcal{T}),
\end{equation}
where $(\cdot,\cdot)$ signifies the $\Ltwo(\Omega)$-inner product. In order to make the forthcoming error analysis in \S\ref{sc:erroranalysis} as general as possible, we make the assumption that an error bound for the elliptic problem \eqref{eq:ell} and its finite element counterpart \eqref{eq:FEM} exists in the $\Linf(\Omega)$-norm $\|\cdot\|$ but refrain from specifying the estimator.

\begin{assumption}\label{as:ell}
There exists an a posteriori error estimator $\mathcal{E}$ for the error between the solution of~\eqref{eq:ell} and its conforming FEM approximation~\eqref{eq:FEM} such that the following pointwise bound holds
\begin{equation}
\begin{aligned}
\notag
\|w-w_{\cG}\| \leq C_{\infty}\mathcal{E}(w_{\cG},g,\mathcal{T}),
\end{aligned}
\end{equation}
where $C_{\infty}>0$ is a constant that is independent of the mesh-size, $\diff$, $w$ and $w_{\cG}$.
\end{assumption}
To the best of our knowledge, the sharpest elliptic \emph{a posteriori} error estimators currently available in the literature for the $\Linf$ norm are from \cite{L14} for $d=1$ and from \cite{DK14} for $d = 2$ or $d=3$; we would currently recommend that these estimators are used for $\mathcal{E}$. Other elliptic \emph{a posteriori} error estimators for the $\Linf$ norm are also available \cite{L07, N06, K15} which could be useful in certain situations.

\subsection{Fully Discrete Scheme}

For the temporal discontinuous Galerkin discretisation, we introduce a sequence of time nodes $0 := t_0 < t_1 < \ldots$ which define a time partition $\M:=\{I_m\}_{m\ge1}$ in terms of open time intervals~$I_m:=(t_{m-1},t_m)$, $m\ge 1$. The length $k_m := t_m - t_{m-1}$ (which may be variable) of the time interval $I_m$ is called the time step length.

Let $\mathcal{T}_1$ denote an initial spatial mesh of $\Omega$ associated with the first time interval $I_1$. Then, to each successive interval $I_m$, $m\ge 2$, we associate a spatial mesh $\mathcal{T}_m$ which is assumed to have been obtained from $\mathcal{T}_{m-1}$ by local refinement and/or coarsening. This restriction upon mesh change is made in order to avoid degradation of the finite element solution, cf. \cite{BKM13,D82}. To each interval $I_m$, we then assign the finite element space $\mathbb{V}_{\cG}^m := \mathbb{V}_{\cG}(\mathcal{T}_m)$, cf.~\eqref{eq:FEspace}.

Additionally, to each interval $I_m$, we associate the (possibly variable) polynomial degree $r_m \in \mathbb{N}\cup\{0\}$ which takes the role of a local approximation order. Given a (real) vector space~$\mathbb{X}$ and some~$r\in\mathbb{N}\!\cup\!\{0\}$, the set
\[
\mathbb{P}^{r}(J;\mathbb{X}):=\left\{\mathrm{p}\in \C^0(J;\mathbb{X}):\,\mathrm{p}(t)=\sum_{i=0}^rx_it^i,\, x_i\in \mathbb{X}\right\}\!,
\]
signifies the space of all polynomials of degree at most~$r$ on an interval~$J\subset\mathbb{R}$ with values in~$\mathbb{X}$.


Finally, given a piecewise-in-time continuous function $U$ which may be discontinuous at each time node $t_m$, we signify by $\timejump{U}_m := U(t_m^+) - U(t_m^-)$ the temporal jump of $U$ with the one-sided limits $\displaystyle U(t_m^\pm) \coloneqq \lim_{s\searrow 0} U(t_m\pm s)$. For $m=0$, we define the initial value 
\begin{equation}\label{eq:init}
U(t^{-}_0) := \pi_1 u_0,
\end{equation}
with $u_0$ from~\eqref{model_strong} where $\pi_1$ is a suitable projection or interpolation operator into the finite element space $\mathbb{V}_{\cG}^1$. 

\begin{remark}\label{rem:init}
We point out that the choice of projection or interpolation for the initial condition $u_0$ has a large impact on the effectiveness of the \emph{a posteriori} error estimator derived in \S\ref{sc:erroranalysis}. In particular, problems were observed when the $\Ltwo(\Omega)$-projection or the standard nodal interpolant were used. It transpires that a good choice is to use the \emph{energy projection} $\pi_1 u_0 \in \mathbb{V}_{\cG}^1$ given by the solution of the finite element problem 
\begin{equation}\label{eq:pi1}
(\nabla \pi_1 u_0, \nabla v) = (- \Delta u_0, v) \qquad \forall v \in \mathbb{V}_{\cG}^1.
\end{equation}
\end{remark}

With the above notation at hand, the dG-cG discretisation of \eqref{model_strong} reads as follows: For $m\ge 1$, we seek $U |_{I_m} \in \P^{r_m}(I_m; \mathbb{V}_{\cG}^m)$ such that
\begin{multline}\label{dg_method}
\int_{I_m} (U_t,\,V) \dif s+ (\timejump{U}_{m-1},\,V(t_{m-1}^+)) + \int_{I_m} \diff(\nabla U, \,\nabla V) \dif s\\ = \int_{I_m} (f(U),\,V) \dif s 
\qquad\forall V \in \P^{r_m}(I_m; \mathbb{V}_{\cG}^m),
\end{multline}
cf., e.g., \cite{GLW17} for the linear case when $f$ is independent of $U$.

\section{Reconstructions}\label{sc:rec}

Following the approach pioneered in \cite{GLW17}, the \emph{a posteriori} error analysis in the subsequent section~\S\ref{sc:erroranalysis} will be based on suitable temporal and spatial reconstructions.

\subsection{Temporal Reconstruction}

Denote by 
\[
\Pi_m : \Ltwo(I_m;\Ltwo(\Omega)) \rightarrow \P^{r_m}(I_m; \Ltwo(\Omega))
\]
the \emph{temporal $\Ltwo$-projection operator} satisfying
\begin{equation}\label{eq:L2}
\begin{aligned}
f\mapsto \Pi_m(f),\qquad \int_{I_m} (\Pi_m(f),v) \dif s = \int_{I_m} (f,v) \dif s \qquad \forall v \in \P^{r_m}(I_m; \Ltwo(\Omega)).
\end{aligned}
\end{equation}
Furthermore, following the variable order approach~\cite{GLW17,SW19,HW18}, we introduce the \emph{temporal lifting operator} $\chi_m: \mathbb{X} \rightarrow \P^{r_m}(I_m; \mathbb{X})$ (where $\mathbb{X}\subset\Ltwo(\Omega)$ is a linear subspace) originally introduced in~\cite{MN06} which is defined implicitly by
\begin{equation}
\begin{aligned}
\label{lift}
z\mapsto \chi_m(z),\qquad \int_{I_m} (\chi_m(z),v) \dif s = (z,v(t_{m-1}^+)) \qquad \forall v \in \P^{r_m}(I_m; \mathbb{X}). 
\end{aligned}
\end{equation}
The lifting operator $\chi_m$ has an explicit representation which is the subject of the ensuing lemma, see~\cite{ScWi10,HW18} for details.

\begin{lemma}
\label{liftlemma}
Let $\widehat I = [-1,1]$ and $F_m:\widehat{I} \to I_m$ be the affine transformation given by $F_m(\hat t):=(k_m\hat{t} + (t_{m-1}+t_m))/2$, $\hat t\in\widehat I$. Furthermore, let $\widehat{\chi}_m$ denote the lifting operator of \eqref{lift} but taken over the reference interval $\widehat I$ instead of $I_m$ then we have
\[
z - \int_{t_{m-1}}^t \chi_m(z) \dif s = z - \int_{-1}^{F_m^{-1}(t)} \widehat{\chi}_m(z) \dif s = -Q_m(t)z, \qquad z \in \mathbb{X}, \quad t \in I_m.
\]
Here,
\begin{equation}\label{eq:Q}
Q_m(t) := \frac12(-1)^{r_m}\!\left(\widehat{\mathrm{L}}_{r_m+1}(F_m^{-1}(t))-\widehat{\mathrm{L}}_{r_m}(F_m^{-1}(t)) \right),\qquad t\in I_m,
\end{equation}
where $\widehat{\mathrm{L}}_i$ denotes the $i^\text{th}$ Legendre polynomial on the interval $\widehat I$. Additionally, by differentiating the above, we can also obtain an explicit formulation of $\chi_m$: 
\[
\chi_m(z)(t) = Q_m'(t)z, \qquad z \in \mathbb{X}, \quad t \in I_m.
\]
\end{lemma}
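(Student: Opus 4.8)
The plan is to verify the identity directly from the defining relation \eqref{lift} of the lifting operator, reducing everything to the reference interval $\widehat{I}=[-1,1]$ and then invoking known properties of the Legendre polynomials. First I would establish the claim on the reference interval: setting $k_m=1$, $I_m=\widehat I$, the task is to show that the polynomial $t\mapsto z-\int_{-1}^{t}\widehat\chi_m(z)\dif s$, which lies in $\P^{r_m+1}(\widehat I;\mathbb{X})$, equals $-\widehat Q_m(t)z$ where $\widehat Q_m$ is built from $\widehat{\mathrm L}_{r_m}$ and $\widehat{\mathrm L}_{r_m+1}$ as in \eqref{eq:Q}. Differentiating in $t$, this is equivalent to $\widehat\chi_m(z)(t)=\widehat Q_m'(t)z$, together with the single endpoint condition at $t=-1$, namely $z-0 = -\widehat Q_m(-1)z$, i.e.\ $\widehat Q_m(-1)=-1$.

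The two facts about Legendre polynomials that drive the argument are: (i) $\widehat{\mathrm L}_i(-1)=(-1)^i$ and $\widehat{\mathrm L}_i(1)=1$ for all $i$, which immediately gives $\widehat Q_m(-1)=\tfrac12(-1)^{r_m}\big((-1)^{r_m+1}-(-1)^{r_m}\big)=-1$ and $\widehat Q_m(1)=0$; and (ii) the $\mathrm L^2(\widehat I)$-orthogonality of the Legendre family, with $\int_{-1}^{1}\widehat{\mathrm L}_i^2\dif s=\tfrac{2}{2i+1}$. To verify $\widehat\chi_m(z)=\widehat Q_m'(z)\,z$ from \eqref{lift}, I would check that $p\mapsto\int_{-1}^{1}\widehat Q_m'(s)\,v(s)\dif s$ acts as the functional $v\mapsto v(-1)$ on $\P^{r_m}(\widehat I;\mathbb{X})$; integrating by parts, $\int_{-1}^{1}\widehat Q_m' v\dif s=[\widehat Q_m v]_{-1}^{1}-\int_{-1}^{1}\widehat Q_m v'\dif s = -\widehat Q_m(-1)v(-1)-\int_{-1}^{1}\widehat Q_m v'\dif s = v(-1)-\int_{-1}^{1}\widehat Q_m v'\dif s$, using (i). It then remains to show $\int_{-1}^{1}\widehat Q_m v'\dif s=0$ for every $v\in\P^{r_m}$: since $\widehat Q_m$ is, up to the constant $\tfrac12(-1)^{r_m}$, the difference $\widehat{\mathrm L}_{r_m+1}-\widehat{\mathrm L}_{r_m}$, and $v'\in\P^{r_m-1}$, this vanishes by orthogonality of $\widehat{\mathrm L}_{r_m+1}$ and $\widehat{\mathrm L}_{r_m}$ against all lower-degree polynomials. (The component-wise reading in $\mathbb{X}$ is harmless since everything is linear in $z$.)

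Having settled the reference case, I would transfer to a general interval $I_m$ via the affine map $F_m$. The change of variables $s=F_m(\hat s)$ in both the defining integral relation \eqref{lift} and in the integral $\int_{t_{m-1}}^{t}\chi_m(z)\dif s$ produces the Jacobian factor $k_m/2$, and one checks that the reference lifting $\widehat\chi_m$ and its antiderivative are related to $\chi_m$ so that the factors cancel exactly, yielding $z-\int_{t_{m-1}}^{t}\chi_m(z)\dif s = z-\int_{-1}^{F_m^{-1}(t)}\widehat\chi_m(z)\dif s$; this is the first equality in the statement. Composing the reference identity with $F_m^{-1}$ gives $Q_m(t)=\widehat Q_m(F_m^{-1}(t))$, which is precisely \eqref{eq:Q}, and differentiating the relation $z-\int_{t_{m-1}}^{t}\chi_m(z)\dif s=-Q_m(t)z$ in $t$ yields $\chi_m(z)(t)=Q_m'(t)z$, the final assertion.

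The routine part is the bookkeeping with the affine scaling; the only place requiring genuine care is the orthogonality step, i.e.\ making sure the degree count is right so that $\int_{-1}^{1}\widehat Q_m v'\dif s$ really does vanish for all $v\in\P^{r_m}$ — this is where the specific combination $\widehat{\mathrm L}_{r_m+1}-\widehat{\mathrm L}_{r_m}$ (rather than any other pair) is essential, and it is the linchpin that identifies $\widehat\chi_m$ with $\widehat Q_m'$. I would also double-check the normalising constant $\tfrac12(-1)^{r_m}$ against the endpoint condition $\widehat Q_m(-1)=-1$, which pins it down uniquely. Since the explicit formula for $\chi_m$ and the function $Q_m$ are attributed to \cite{ScWi10,HW18}, I would expect the proof to be short and largely a matter of assembling these observations.
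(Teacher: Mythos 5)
Your argument is correct and complete. Note that the paper itself offers no proof of this lemma --- it defers to the references \cite{ScWi10,HW18} --- so your write-up is a legitimate self-contained reconstruction. The key verification is exactly right: the defining relation \eqref{lift} determines $\chi_m(z)$ uniquely (the $\Ltwo(I_m)$ pairing is an inner product on the finite-dimensional polynomial space, and everything reduces to the scalar case by linearity in $z$), so it suffices to check that the candidate $\widehat Q_m'(\cdot)z$ reproduces the functional $v\mapsto v(-1)$ on $\P^{r_m}(\widehat I)$; your integration by parts combined with $\widehat Q_m(1)=0$, $\widehat Q_m(-1)=-1$ and the orthogonality of $\widehat{\mathrm L}_{r_m},\widehat{\mathrm L}_{r_m+1}$ to $\P^{r_m-1}\ni v'$ does precisely that (and the $r_m=0$ case is trivial since then $v'=0$). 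The affine rescaling bookkeeping is also as you describe: the Jacobian $k_m/2$ from \eqref{lift} gives $\chi_m(z)(t)=\tfrac{2}{k_m}\widehat\chi_m(z)(F_m^{-1}(t))$, which cancels against the Jacobian in $\int_{t_{m-1}}^t\chi_m(z)\dif s$ and is exactly the chain-rule factor in $Q_m'=\tfrac{2}{k_m}\,\widehat Q_m'\circ F_m^{-1}$. For comparison, the cited references typically proceed the other way around: they expand $\widehat\chi_m(z)=\sum_{i=0}^{r_m}c_i\widehat{\mathrm L}_i\,z$, determine $c_i=\tfrac{2i+1}{2}(-1)^i$ by testing with each $\widehat{\mathrm L}_j$, and then telescope the antiderivative identity $\int_{-1}^{\hat t}\widehat{\mathrm L}_i\dif s=\tfrac{1}{2i+1}(\widehat{\mathrm L}_{i+1}-\widehat{\mathrm L}_{i-1})$ to arrive at $\widehat Q_m$; your verification of the closed form via duality is shorter and needs only orthogonality, not the normalisation $\int_{-1}^1\widehat{\mathrm L}_i^2\dif s=\tfrac{2}{2i+1}$ (which, incidentally, you list but never actually use). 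The only blemish is the typo $\widehat Q_m'(z)\,z$ for $\widehat Q_m'(\hat t)\,z$.
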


The \emph{temporal reconstruction} of the solution $U$ to ~\eqref{dg_method} is defined by
\begin{equation}\label{eq:Urec}
\widetilde{U}(t) := U(t) +Q_m(t)\timejump{U}_{m-1},\qquad t \in {I}_m, \quad m\ge 1,
\end{equation}
with $Q_m$ from~\eqref{eq:Q}. We thus have
$
\widetilde{U}_t=U_t+\chi_m(\timejump{U}_{m-1})
$
on $I_m$ and so
\[
\int_{I_m}(\widetilde{U}_t,V)\dif s =\int_{I_m} (U_t,\,V) \dif s+ (\timejump{U}_{m-1},\,V(t_{m-1}^+)) \qquad \forall V \in \P^{r_m}(I_m; \mathbb{V}_{\cG}^m).
\]
Invoking~\eqref{eq:L2}, the dG-cG method~\eqref{dg_method} can be equivalently written as
\begin{equation}\label{eq:dGrec1}
\int_{I_m} (\widetilde U_t,\,V) \dif s+ \diff\int_{I_m} (\nabla U, \,\nabla V) \dif s = \int_{I_m} (\Pi_m (f(U)),\,V) \dif s,
\end{equation}
for any $V \in \P^{r_m}(I_m; \mathbb{V}_{\cG}^m)$.

\subsection{Spatial Reconstruction}

We recall the elliptic reconstruction technique \cite{LM06,MN03}. To that end, we introduce the \emph{discrete laplacian} given by
\begin{equation}\label{eq:Am}
\A |_{I_m} := \Pi_m(f(U))-\widetilde U_t|_{I_m}. 
\end{equation}
Then, from~\eqref{eq:dGrec1}, we observe that
\[
\diff\int_{I_m} (\nabla U, \,\nabla V) \dif s = \int_{I_m} (\A ,\,V) \dif s 
\qquad\forall V \in \P^{r_m}(I_m; \mathbb{V}_{\cG}^m).
\]
Now, for $t\in I_m$, define the \emph{elliptic reconstruction} $\omega(t)\in \H^1_0(\Omega)$ to be the unique solution of the elliptic problem
\begin{equation}
\begin{aligned}
\label{elliprecon}
\diff(\nabla \omega(t), \nabla v) =(\A(t),v)  \qquad \forall v\in \H^1_0(\Omega).
\end{aligned}
\end{equation}
In strong form this reads
\begin{equation}\label{eq:wstrong}
-\diff\Delta\omega(t)=\A(t)\qquad\text{in }\H^{-1}(\Omega),
\end{equation}
with $\H^{-1}(\Omega)$ the dual space of $\H^1_0(\Omega)$. Upon expanding $\omega$ and $\A$ in an (orthonormal) Legendre basis in time, we see that $\omega|_{I_m}\in\P^{r_m}(I_m;\H^1_0(\Omega))$. Moreover, we have
\[
\diff\int_{I_m}(\nabla (\omega-U), \nabla V) \dif s= 0 
\qquad\forall V\in\P^{r_m}(I_m;\mathbb{V}_{\cG}^m).
\]
Since $(\omega-U) |_{I_m} \in\P^{r_m}(I_m;\H^1_0(\Omega))$, we deduce that
\[
\diff(\nabla (\omega-U)(t), \nabla v_{\cG})=0\qquad\forall v_{\cG}\in\mathbb{V}_{\cG}^m\quad \forall t\in I_m,
\]
i.e., $U |_{I_m}$ is the (pointwise) finite element approximation of $\omega |_{I_m}$ and so $\omega - U$ is an \emph{elliptic error} that can be bounded with the aid of the \emph{a posteriori} error estimator from Assumption~\ref{as:ell}.

\begin{remark}
If $\pi_1$ from \eqref{eq:init} is the energy projection proposed in~\eqref{eq:pi1} then
\[
\diff(\nabla U(t_0^-),\nabla v_{\cG})=\diff(\nabla(\pi_1u_0),\nabla v_{\cG})=\diff(\nabla u_0,\nabla v_{\cG})\qquad\forall v_{\cG}\in\mathbb{V}_{\cG}^1.
\]
By setting
\begin{equation}\label{eq:winit}
\omega(t_0^-) := u_0,
\end{equation}
the initial elliptic error $\omega(t_0^-) - U(t_0^-)$, although already computable, may also be estimated via an elliptic \emph{a posteriori} error estimator as proposed in Assumption~\ref{as:ell}.
\end{remark}

\subsection{Space-Time Reconstruction}

We emphasise that the derivation of \emph{a posteriori} error bounds for dG-in-time discretisations mandates the reconstruction of both $U$ and $\omega$, otherwise, suboptimal error estimates in time will result; cf.~\cite{AkMaNo09}. For this reason, we introduce the temporal reconstruction $\widetilde{\omega}$ of $\omega$ from~\eqref{elliprecon} given by
\begin{equation}
\label{timerecon}
\widetilde{\omega}(t) := \omega(t) +Q_m(t)\timejump{\omega}_{m-1}, \qquad t \in {I}_m, \quad m \ge 1,
\end{equation}
While $U$ \eqref{dg_method} and $\omega$ \eqref{elliprecon} are only piecewise continuous with a jump discontinuity at each time node, we have the following result for their reconstructions.

\begin{lemma}\label{lem:rec}
The reconstructions $\widetilde{U}$ \eqref{eq:Urec} and $\widetilde{\omega}$ \eqref{timerecon} are both continuous. 
\end{lemma}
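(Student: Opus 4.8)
The plan is to show continuity of $\widetilde U$ and $\widetilde\omega$ by verifying that the jump of each reconstruction at every interior time node $t_m$ vanishes, and that the left limit at $t_0$ matches the prescribed initial value. Both reconstructions have the same structure, namely a piecewise continuous function plus the correction term $Q_m(t)\timejump{\cdot}_{m-1}$, so it suffices to carry out the argument once in terms of a generic piecewise-polynomial-in-time function $W$ (standing for $U$ or $\omega$) and its reconstruction $\widetilde W(t) = W(t) + Q_m(t)\timejump{W}_{m-1}$ on $I_m$.

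First I would record the endpoint values of the weight $Q_m$. From the explicit formula~\eqref{eq:Q} and the normalisation $\widehat{\mathrm L}_i(\pm1) = (\pm1)^i$ of the Legendre polynomials, evaluating at $t = t_{m-1}$ (i.e. $F_m^{-1}(t) = -1$) gives $Q_m(t_{m-1}) = \tfrac12(-1)^{r_m}\big((-1)^{r_m+1} - (-1)^{r_m}\big) = \tfrac12(-1)^{r_m}\cdot(-2)(-1)^{r_m} = -1$, while at $t = t_m$ (i.e. $F_m^{-1}(t) = 1$) we get $Q_m(t_m) = \tfrac12(-1)^{r_m}(1 - 1) = 0$. (Alternatively, these two identities follow directly from Lemma~\ref{liftlemma}: taking $t\to t_{m-1}^+$ in $z - \int_{t_{m-1}}^t\chi_m(z) = -Q_m(t)z$ yields $Q_m(t_{m-1}) = -1$, and integrating the defining relation~\eqref{lift} of $\chi_m$ against the constant test function $v\equiv1$ gives $\int_{I_m}\chi_m(z) = z$, hence $Q_m(t_m) - Q_m(t_{m-1}) = 1$, so $Q_m(t_m) = 0$.)

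Next I would assemble these facts. On $I_m$ we have $\widetilde W(t) = W(t) + Q_m(t)\timejump{W}_{m-1}$, so, using $Q_m(t_{m-1}^+) = -1$ and $Q_m(t_m^-) = 0$,
\[
\widetilde W(t_{m-1}^+) = W(t_{m-1}^+) - \timejump{W}_{m-1} = W(t_{m-1}^+) - \big(W(t_{m-1}^+) - W(t_{m-1}^-)\big) = W(t_{m-1}^-),
\]
and $\widetilde W(t_m^-) = W(t_m^-)$. Comparing across the node $t_m$ for $m\ge1$: the right limit computed from $I_{m+1}$ is $\widetilde W(t_m^+) = W(t_m^-)$, while the left limit computed from $I_m$ is $\widetilde W(t_m^-) = W(t_m^-)$; these agree, so $\widetilde W$ is continuous at every interior node, and it is continuous inside each $I_m$ since $W$ and $Q_m$ are. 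At $t_0$, the left limit is $\widetilde W(t_0^-) := W(t_0^-)$ by the conventions~\eqref{eq:init} (for $U$) and~\eqref{eq:winit} (for $\omega$), and the right limit from $I_1$ is $\widetilde W(t_0^+) = W(t_0^-)$ by the same computation, so no start-up discontinuity is introduced either. Applying this with $W = U$ gives continuity of $\widetilde U$, and with $W = \omega$ gives continuity of $\widetilde\omega$, which is the claim.

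The only mildly delicate point is the bookkeeping of one-sided limits and the sign in the definition $\timejump{W}_{m-1} = W(t_{m-1}^+) - W(t_{m-1}^-)$; once the two endpoint values $Q_m(t_{m-1}) = -1$ and $Q_m(t_m) = 0$ are in hand, the telescoping is immediate. I expect the extraction of these two endpoint values of $Q_m$ — whether from the Legendre endpoint identities or from Lemma~\ref{liftlemma} — to be the main (and essentially only) substantive step.
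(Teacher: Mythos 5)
Your proposal is correct and follows essentially the same route as the paper: extract the endpoint values $Q_m(t_{m-1}^+)=-1$ and $Q_m(t_m^-)=0$ from the Legendre normalisation $\widehat{\mathrm{L}}_i(\pm1)=(\pm1)^i$, then check that the one-sided limits of the reconstruction agree at each node. The uniform treatment of $U$ and $\omega$ via a generic $W$ and the extra check at $t_0$ are harmless refinements of what the paper does.
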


\begin{proof}
Let $m\ge 2$. Using the fact that Legendre polynomials on the interval $[-1,1]$ satisfy $\widehat{\mathrm{L}}_i(\pm1)=(\pm1)^i$, we have $Q_m(t_{m-1}^+)=-1$ and $Q_{m-1}(t_{m-1}^-)=0$ hence
\begin{align*}
\widetilde U(t_{m-1}^+)
=U(t_{m-1}^+)+Q_m(t_{m-1}^+)\timejump{U}_m
=U(t_{m-1}^-)
=\widetilde U(t_{m-1}^-).
\end{align*}
Therefore, $\widetilde U$ is continuous at $t_{m-1}$. The continuity of $\widetilde\omega$ is similarly established.
\end{proof}

Furthermore, recalling~\eqref{eq:Am}, we introduce the temporal reconstruction $\widetilde{\A}$ of the discrete laplacian $\A$ which is similarly given by 
\begin{equation}\label{Atildedef}
\widetilde{\A}(t):=\A(t) + Q_m(t)\timejump{\A}_{m-1},\qquad t\in I_m, \quad m \geq 1,
\end{equation}
where we set $\A(t_0^-) := -\diff \Delta u_0$ in light of \eqref{eq:wstrong}. It thus follows from \eqref{eq:wstrong} that
\begin{equation}\label{eq:w'}
-\!\diff\Delta\widetilde\omega=\widetilde{\A}\qquad\text{in }\H^{-1}(\Omega).
\end{equation}

\section{A Posteriori Error Analysis}\label{sc:erroranalysis}

\subsection{Temporal Error}

For the error analysis, we proceed roughly along the lines of \cite{IM17} by constructing an error equation for the \emph{reconstructed parabolic error} $\widetilde{\rho}:= u - \widetilde{\omega}$. Note that $\widetilde{\rho}$ is continuous due to Lemma~\ref{lem:rec}. We begin by subtracting \eqref{eq:w'} from \eqref{model_strong} which gives
\[
u_t -\diff\Delta \widetilde{\rho} = f(u) - \widetilde{\A}\qquad\text{in }\H^{-1}(\Omega).
\]
Next, we add and subtract $f(\widetilde{U})$ and $\widetilde{U}_t$ yielding the \emph{error equation}
\begin{equation}
\begin{aligned}
\label{erroreqn}
\widetilde{\rho}_t -\diff\Delta \widetilde{\rho} = f(u) - f(\widetilde{U}) + R_{\mathrm{time}} - \widetilde{\epsilon}_t \qquad\text{in }\H^{-1}(\Omega),
\end{aligned}
\end{equation}
where $R_{\mathrm{time}}$ is the \emph{temporal residual} given by $R_{\mathrm{time}}:= f(\widetilde{U}) - \widetilde{U}_t - \widetilde{\A}$ and $\widetilde{\epsilon} := \widetilde{\omega} - \widetilde{U}$ is the \emph{reconstructed elliptic error}. Letting $\widetilde{e} := u - \widetilde{U}$ denote the \emph{reconstructed error} associated with the temporal reconstruction $\widetilde{U}$ then we have the decomposition $\widetilde{e} = \widetilde{\rho} + \widetilde{\epsilon}$. It can be seen that $R_{\mathrm{time}}$ is optimal order in time by substituting $\widetilde{\A}$ \eqref{Atildedef}.

By using standard parabolic semi-group theory, see, e.g., \cite[p 93]{T84}, it can be inferred that the solution $\widetilde{\rho}$ of the error equation \eqref{erroreqn}, for each $m\ge 1$ for which $\widetilde{\rho}$ exists on $I_m$, can be represented formally by
\[
\widetilde\rho(t)=e^{-(t-t_{m-1})\diff\Delta}\widetilde\rho(t_{m-1})
+\int_{t_{m-1}}^te^{-(t-s)\diff\Delta}[f(u) - f(\widetilde{U}) + R_{\mathrm{time}} - \widetilde{\epsilon}_t] \dif s,
\]
$t\in I_m$. Equivalently, upon noticing that $u=\widetilde\rho+\widetilde\omega$, 
we devise the fixed point equation $\Phi_m(\widetilde\rho)=\widetilde\rho$ where
\[
\Phi_m(v)(t)  := \mathrm{e}^{-(t-t_{m-1}) \diff\Delta} \widetilde{\rho}(t_{m-1}) + \int_{t_{m-1}}^t\mathrm{e}^{-(t-s) \diff\Delta} [f(v+\widetilde{\omega})-f(\widetilde{U})+R_{\mathrm{time}}-\widetilde{\epsilon}_t ] \dif s,
\]
$t\in I_m$. 

For $m\ge 1$, we define the local $\Linf(\Linf)$ norm
\begin{equation}
\begin{aligned}
\notag
\|v\|_m := \sup_{t \in I_m} \|v(t)\|,\qquad v\in\C^0(\overline{I}_m;\Linf(\Omega)).
\end{aligned}
\end{equation}
Then, employing the Banach fixed point theorem, our goal is now to show that $\Phi_m$ has a unique fixed point in some ball centered on zero in the $\|\cdot\|_m$ norm which, by Duhamel's Principle, must also solve the error equation \eqref{erroreqn}. The fundamental idea here is that we will construct the radius of the ball to be a computable quantity which, in turn, implies an \emph{a posteriori} error bound for $\|\widetilde{\rho}\|_m$. Before doing so, we require some general \emph{a posteriori} error bounds to hold for the elliptic error $\widetilde{\epsilon}$. To that end, we make the following assumptions.

\begin{assumption}
\label{spaceest}
We assume that the estimates
\begin{align*}
\|\widetilde{\epsilon}(t)\| & \leq C_{\infty}\eta_{\mathrm{space}}(t) :=  C_{\infty} \mathcal{E}(\widetilde{U}(t), \,\widetilde{\A}(t), \,\mathcal{T}_{m-2} \vee\mathcal{T}_{m-1} \vee \mathcal{T}_{m}),  && t \in I_m,\,\, m \ge 1,\\
\|\widetilde{\epsilon}_t(t)\| & \leq C_{\infty}{\accentset{\bigcdot}{\eta}}_{\mathrm{space}}(t) := C_{\infty} \mathcal{E}(\widetilde{U}_t(t), \,\widetilde{\A}_t(t),\, \mathcal{T}_{m-2} \vee\mathcal{T}_{m-1} \vee \mathcal{T}_{m}), && t \in I_m,\,\, m \ge 1,
\end{align*}
hold where $\eta_{\mathrm{space}}$ is the space estimator and ${\accentset{\bigcdot}{\eta}}_{\mathrm{space}}$ is the space derivative estimator. Here, $C_{\infty}>0$ is a constant that is independent of the mesh-size, $\diff$, $u$ and $U$ but may be dependent upon the number of refinement levels between the meshes $\mathcal{T}_{m-2}$, $\mathcal{T}_{m-1}$ and $\mathcal{T}_{m}$ (for $m\ge 3$).
\end{assumption}

\begin{remark}
Assumption \ref{spaceest} can be shown to hold for residual-based elliptic \emph{a posteriori} error estimators including any of those from \cite{L07, N06, K15} as well as our recommended choices \cite{L14, DK14}. Indeed, since $\widetilde{\omega}$ enjoys the elliptic reconstruction property (see above) and $\widetilde{\A}(t) \in \C^0(\Omega)$ for any $t \in I_m$, the elliptic error $\|\widetilde{\epsilon}(t)\|$ can be estimated using Assumption \ref{as:ell} thereby yielding the first bound of Assumption  \ref{spaceest}. The second bound of Assumption \ref{spaceest} follows from Assumption \ref{as:ell} upon noting that $\widetilde{\omega}_t$ also possesses the elliptic reconstruction property provided that the discrete Laplacian satisfies $\widetilde{\A}_t(t) \in \C^0(\Omega)$ which is true, for instance, under the assumption that the nonlinearity $f$ is continuously differentiable.
\end{remark}

In order to proceed with our analysis, we first suppose that $\psi_m$ is a computable quantity such that
\begin{equation}
\label{psidef1}
\|\widetilde{\rho}(t_{m-1})\| +  \int_{I_m} \!\!\mathfrak{L}(s,\|\widetilde{U}(s)\|,\|\widetilde{U}(s)\|+\|\widetilde{\epsilon}(s)\|)\|\widetilde{\epsilon}(s)\| \dif s+  \eta_{\mathrm{time}}^m + \int_{I_m}\!\! \|\widetilde{\epsilon}_t(s)\| \dif s  \leq \psi_m,
\end{equation}
holds where $\displaystyle\eta_{\mathrm{time}}^m$ is the \emph{time estimator} given by 
\begin{equation}
\begin{aligned}
\notag
\eta_{\mathrm{time}}^m := \int_{I_m} \|R_{\mathrm{time}}(s)\| \dif s.
\end{aligned}
\end{equation}
Furthermore, if it exists, let $\delta_m \in [1,\infty)$ denote the smallest root of the function $\phi_m: [1, \infty) \to \mathbb{R}$ defined by 
\begin{equation}\label{phi}
\phi_m(\delta) := 1+ \delta\left[\int_{I_m} L(s, \delta) \dif s -1 \right],
\end{equation}
where
\begin{equation}
\begin{aligned}
\notag
L(s,\delta) := \mathfrak{L}(s,\delta\psi_m + \|\widetilde{U}(s)\| + C_{\infty}\eta_{\mathrm{space}}(s), \delta\psi_m + \|\widetilde{U}(s)\| +  C_{\infty}\eta_{\mathrm{space}}(s)),
\end{aligned}
\end{equation}
for $s \in I_m$ and $\delta \in [1,\infty)$.

We now have the following preliminary error bound.

\begin{proposition}\label{pr:rhoerrorbound}
If the time step length $k_m$, $m\ge 1$, is chosen sufficiently small then $\delta_m \in [1,\infty)$ exists and the error equation \eqref{erroreqn} has a unique local solution $\widetilde{\rho}$ that satisfies the error bound
$
\|\widetilde{\rho}\|_m \leq \theta_m\psi_m
$
where $\theta_m \in [1,\infty)$ is given by 
\begin{equation}\label{eq:theta}
\theta_m := \exp\!\bigg(\int_{I_m}\!\!\mathfrak{L}(s,\delta_m\psi_m+ \|\widetilde{U}(s)\| + C_{\infty}\eta_{\mathrm{space}}(s),\|\widetilde{U}(s)\| + C_{\infty}\eta_{\mathrm{space}}(s)) \dif s \bigg) \geq 1.
\end{equation}
\end{proposition}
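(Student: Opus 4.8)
The plan is to realise $\widetilde{\rho}$ as the unique fixed point of $\Phi_m$ in a suitable closed ball of the Banach space $(\C^0(\overline I_m;\Linf(\Omega)),\,\|\cdot\|_m)$ via the Banach fixed point theorem, and then to sharpen the crude bound this produces to the stated $\|\widetilde{\rho}\|_m\le\theta_m\psi_m$ by a Gronwall argument. The only analytic input beyond the hypotheses already in force is the contraction property of the heat semigroup with homogeneous Dirichlet conditions on $\Linf(\Omega)$, i.e.\ $\|\mathrm e^{-(t-s)\diff\Delta}g\|\le\|g\|$ for $t\ge s$ (maximum principle); given Assumption~\ref{spaceest} and the regularity of the data, the integrand defining $\Phi_m$ lies in $\mathrm L^1(I_m;\Linf(\Omega))$, so $\Phi_m$ is a well-defined self-map of that space. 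The first step is self-mapping and contraction on the ball $B_r:=\{v:\|v\|_m\le r\}$ with $r:=\delta_m\psi_m$: applying the semigroup contraction termwise to $\Phi_m(0)$, estimating $f(\widetilde{\omega})-f(\widetilde{U})$ via~\eqref{eq:Lip} together with $\widetilde{\omega}-\widetilde{U}=\widetilde{\epsilon}$, $\|\widetilde{\omega}\|\le\|\widetilde{U}\|+\|\widetilde{\epsilon}\|$ and the monotonicity of $\mathfrak L$, one obtains $\|\Phi_m(0)\|_m\le\psi_m$ directly from~\eqref{psidef1}; and for $v,w\in B_r$ the difference $\Phi_m(v)-\Phi_m(w)$ is the Duhamel integral of $f(v+\widetilde{\omega})-f(w+\widetilde{\omega})$, so bounding $\|v+\widetilde{\omega}\|,\|w+\widetilde{\omega}\|\le\delta_m\psi_m+\|\widetilde{U}\|+C_\infty\eta_{\mathrm{space}}$ via Assumption~\ref{spaceest} and using monotonicity of $\mathfrak L$ yields $\|\Phi_m(v)-\Phi_m(w)\|_m\le\big(\int_{I_m}L(s,\delta_m)\dif s\big)\|v-w\|_m$.

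Next I would settle the existence of $\delta_m$ and close the fixed point argument. Since $\mathfrak L(\cdot,a,b)\in\mathrm L^1_{\mathrm{loc}}$ and the quantities entering $L(s,2)$ stay bounded as $k_m\to0$, we have $\int_{I_m}L(s,2)\dif s\to0$, hence $\phi_m(2)=2\int_{I_m}L(s,2)\dif s-1<0$ for $k_m$ small, while $\phi_m(1)=\int_{I_m}L(s,1)\dif s\ge0$; the intermediate value theorem then produces the smallest root $\delta_m\in[1,2]$ of~\eqref{phi}. From $\phi_m(\delta_m)=0$ one reads off $\int_{I_m}L(s,\delta_m)\dif s=1-\delta_m^{-1}<1$, so $\Phi_m$ is a contraction on $B_r$, and $\|\Phi_m(v)\|_m\le\|\Phi_m(0)\|_m+(1-\delta_m^{-1})r\le\psi_m+(\delta_m-1)\psi_m=r$ shows $\Phi_m(B_r)\subset B_r$. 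The Banach fixed point theorem then gives a unique $\widetilde{\rho}\in B_r$ with $\Phi_m(\widetilde{\rho})=\widetilde{\rho}$; by Duhamel's principle it solves~\eqref{erroreqn} on $I_m$, and a~priori $\|\widetilde{\rho}\|_m\le\delta_m\psi_m$.

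Finally I would improve this bound. Returning to $\widetilde{\rho}=\Phi_m(\widetilde{\rho})$, using $\widetilde{\rho}+\widetilde{\omega}=u$ and splitting $f(u)-f(\widetilde{U})=\big(f(u)-f(\widetilde{\omega})\big)+\big(f(\widetilde{\omega})-f(\widetilde{U})\big)$: with $u-\widetilde{\omega}=\widetilde{\rho}$, $\|u\|\le\delta_m\psi_m+\|\widetilde{U}\|+C_\infty\eta_{\mathrm{space}}(s)$, $\|\widetilde{\omega}\|\le\|\widetilde{U}\|+C_\infty\eta_{\mathrm{space}}(s)$ and monotonicity of $\mathfrak L$, the first summand is bounded by $\mathfrak L(s,\delta_m\psi_m+\|\widetilde{U}\|+C_\infty\eta_{\mathrm{space}}(s),\|\widetilde{U}\|+C_\infty\eta_{\mathrm{space}}(s))\|\widetilde{\rho}(s)\|$, whose coefficient is exactly the integrand of $\log\theta_m$ in~\eqref{eq:theta}, while the second summand is bounded by $\mathfrak L(s,\|\widetilde{U}\|,\|\widetilde{U}\|+\|\widetilde{\epsilon}\|)\|\widetilde{\epsilon}(s)\|$, which together with the $R_{\mathrm{time}}$ and $\widetilde{\epsilon}_t$ contributions and $\|\widetilde{\rho}(t_{m-1})\|$ is absorbed into $\psi_m$ by~\eqref{psidef1}. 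This leads to $\|\widetilde{\rho}(t)\|\le\psi_m+\int_{t_{m-1}}^t\mathfrak L(s,\delta_m\psi_m+\|\widetilde{U}\|+C_\infty\eta_{\mathrm{space}}(s),\|\widetilde{U}\|+C_\infty\eta_{\mathrm{space}}(s))\|\widetilde{\rho}(s)\|\dif s$, and Gronwall's inequality yields $\|\widetilde{\rho}\|_m\le\theta_m\psi_m$; since $\mathfrak L\ge0$ and $\mathfrak L(\cdot,a,b)\in\mathrm L^1_{\mathrm{loc}}$ we have $\theta_m\in[1,\infty)$.

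I expect the main obstacle to be purely organisational: every appeal to the local Lipschitz estimate~\eqref{eq:Lip} must order the two arguments of $\mathfrak L$ so that, after invoking monotonicity, the term collapses onto exactly a summand of~\eqref{psidef1}, onto $L(s,\delta_m)$, or onto the integrand of~\eqref{eq:theta}. In particular, the asymmetric way the two arguments of $\mathfrak L$ enter~\eqref{psidef1} versus~\eqref{eq:theta} is what forces the decomposition $f(u)-f(\widetilde{U})=(f(u)-f(\widetilde{\omega}))+(f(\widetilde{\omega})-f(\widetilde{U}))$ in the last step rather than the naive route through $\|\widetilde e\|\le\|\widetilde{\rho}\|+\|\widetilde{\epsilon}\|$; a minor additional point is the elementary continuity/monotonicity analysis of $\phi_m$ needed to certify that $\delta_m$ exists once $k_m$ is small.
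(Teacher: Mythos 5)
Your proposal is correct and follows exactly the route the paper intends: the paper's own proof is only a citation to the arguments of [IM17]/[KMW16], for which the text preceding the proposition has already set up the fixed point map $\Phi_m$, the function $\phi_m$, and the quantity $L(s,\delta)$, and your Banach-fixed-point-plus-Gronwall argument (contraction constant $\int_{I_m}L(s,\delta_m)\,\mathrm{d}s = 1-\delta_m^{-1}<1$ from $\phi_m(\delta_m)=0$, self-mapping of the ball of radius $\delta_m\psi_m$, then the splitting $f(u)-f(\widetilde{U})=(f(u)-f(\widetilde{\omega}))+(f(\widetilde{\omega})-f(\widetilde{U}))$ to recover $\theta_m\psi_m$) is precisely the cited argument. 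The only point worth making explicit is the symmetry of the Lipschitz bound \eqref{eq:Lip} in $v$ and $w$, which you already flag as the bookkeeping needed to match the argument ordering of $\mathfrak{L}$ in \eqref{psidef1} versus \eqref{eq:theta}.
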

\begin{proof}
The error equation \eqref{erroreqn} has precisely the same form as the error equation from \cite{IM17} which, under the nonlinearity property \eqref{eq:Lip} and upon assuming \eqref{psidef1}, was shown to have a unique local solution satisfying the above error bound provided that $\delta_m$ exists. Additionally, it was shown in \cite{KMW16,IM17} that $\phi_m$ has a root $\delta_m \in [1,\infty)$ if $k_m$ is chosen sufficiently small.
\end{proof}

\subsection{Computable Error Bound}

In order to transform the parabolic reconstruction error bound of Proposition \ref{pr:rhoerrorbound} into an \emph{a posteriori} error estimate, all that is required is an explicit characterization of $\psi_m$; specifically, we need to bound the various terms on the left-hand side of \eqref{psidef1} by computable quantities.

To begin, we note that Proposition \ref{pr:rhoerrorbound} implies that if $\delta_1, \ldots, \delta_{m-1}$ exist then we have
$
\|\widetilde{\rho}(t_{m-1})\| \leq \|\widetilde{\rho}\|_{m-1} \leq \theta_{m-1}\psi_{m-1}
$
for $m\ge 2$. For the first interval, we instead have the explicit relation $\widetilde{\rho}(t_0) = u(t_0) - \widetilde{\omega}(t_0) = u_0 - u_0 = 0$, cf.~\eqref{eq:winit}. To bound the first term containing $\widetilde{\epsilon}$ in \eqref{psidef1}, we utilise Assumption \ref{spaceest} together with the monotonicity property of $\mathfrak{L}$, viz.,
\begin{multline*}
\int_{I_m} \mathfrak{L}(s,\|\widetilde{U}(s)\|,\|\widetilde{U}(s)\|+\|\widetilde{\epsilon}(s)\|)\,\|\widetilde{\epsilon}(s)\| \dif s\\ \leq C_{\infty}\int_{I_m} \mathfrak{L}(s,\|\widetilde{U}(s)\|,\|\widetilde{U}(s)\|+C_{\infty}\eta_{\mathrm{space}}(s))\,\eta_{\mathrm{space}}(s) \dif s.
\end{multline*}
The second term containing $\widetilde{\epsilon}$ in \eqref{psidef1} can be estimated directly by applying Assumption \ref{spaceest} as follows
\[
 \int_{I_m} \|\widetilde{\epsilon}_t(s)\| \dif s  \leq  C_{\infty}\int_{I_m} {\accentset{\bigcdot}{\eta}}_{\mathrm{space}}(s) \dif s.
\]
Combining the estimates, we see that we can define $\psi_m$ from~\eqref{psidef1} recursively by
\begin{multline}\label{psi}
\psi_m  := \theta_{m-1}\psi_{m-1}+ C_{\infty}\int_{I_m} \mathfrak{L}(s,\|\widetilde{U}(s)\|,\|\widetilde{U}(s)\|+C_{\infty}\eta_{\mathrm{space}}(s))\,\eta_{\mathrm{space}}(s) \dif s\\+\eta_{\mathrm{time}}^m+C_{\infty}\int_{I_m} {\accentset{\bigcdot}{\eta}}_{\mathrm{space}}(s) \dif s,
\end{multline}
with $\theta_0 := 1$ and $\psi_0 := 0$. With $\psi_m$ defined, all components of the error bound are now in place as well as fully computable, and we are ready to state the main result.

\begin{theorem}
\label{maintheorem}
For $M\ge 1$, suppose that $\delta_1, \ldots, \delta_M$ exist then the reconstructed error of the dG-cG method \eqref{dg_method} satisfies the $\Linf(\Linf)$ a posteriori bound
\begin{equation}
\notag
\max_{1 \leq m \leq M} \|\widetilde{e}\|_m \leq \theta_M \psi_M + C_{\infty}\|\eta_{\mathrm{space}}\|_{\Linf(0,t_M)}.
\end{equation}
\end{theorem}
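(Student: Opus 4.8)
The plan is to combine the local reconstruction bound of Proposition~\ref{pr:rhoerrorbound} with the error splitting $\widetilde{e}=\widetilde\rho+\widetilde\epsilon$ and the space estimator from Assumption~\ref{spaceest}. Since $\delta_1,\dots,\delta_M$ exist by hypothesis, Proposition~\ref{pr:rhoerrorbound} applies on every interval $I_m$, $1\le m\le M$, and gives $\|\widetilde\rho\|_m\le\theta_m\psi_m$ with $\theta_m,\psi_m$ as in~\eqref{eq:theta}, \eqref{psi}; the definition of $\psi_m$ in~\eqref{psi} is precisely the computable quantity that was shown to satisfy the abstract requirement~\eqref{psidef1}, using Assumption~\ref{spaceest} and the monotonicity of $\mathfrak L$ exactly as carried out in the paragraph preceding the theorem, so no new work is needed to justify that $\psi_m$ is admissible.

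First I would record that for each $m$ with $1\le m\le M$,
\[
\|\widetilde e\|_m \le \|\widetilde\rho\|_m + \|\widetilde\epsilon\|_m \le \theta_m\psi_m + C_\infty\sup_{t\in I_m}\eta_{\mathrm{space}}(t),
\]
the first inequality by the triangle inequality and the splitting, the second by Proposition~\ref{pr:rhoerrorbound} for the $\widetilde\rho$-term and by the first bound of Assumption~\ref{spaceest} for the $\widetilde\epsilon$-term. Next I would argue that the sequences $(\theta_m\psi_m)_{m\ge1}$ are nondecreasing: this follows because $\theta_m\ge1$ for all $m$ (stated in~\eqref{eq:theta}) and because the recursion~\eqref{psi} has the form $\psi_m=\theta_{m-1}\psi_{m-1}+(\text{nonnegative terms})$, so $\theta_m\psi_m\ge\psi_m\ge\theta_{m-1}\psi_{m-1}$. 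Hence $\max_{1\le m\le M}\theta_m\psi_m=\theta_M\psi_M$. Finally, taking the maximum over $m$ of the displayed per-interval bound and using $\sup_{t\in I_m}\eta_{\mathrm{space}}(t)\le\|\eta_{\mathrm{space}}\|_{\Linf(0,t_M)}$ for every $m\le M$ yields
\[
\max_{1\le m\le M}\|\widetilde e\|_m \le \theta_M\psi_M + C_\infty\|\eta_{\mathrm{space}}\|_{\Linf(0,t_M)},
\]
which is the claim.

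The only genuine subtlety — and the step I would be most careful with — is the monotonicity of $m\mapsto\theta_m\psi_m$, since the final bound replaces a max over $m$ by evaluation at $m=M$; here one must invoke both $\theta_m\ge1$ and the nonnegativity of every additive contribution to $\psi_m$ in~\eqref{psi} (the three integral terms involving $\mathfrak L$, $\eta_{\mathrm{time}}^m$, and $\accentset{\bigcdot}{\eta}_{\mathrm{space}}$ are manifestly nonnegative since $\mathfrak L\ge0$ and the estimators are nonnegative, and $\psi_0=0$, $\theta_0=1$ anchor the recursion). Everything else is a direct assembly of results already proved: Proposition~\ref{pr:rhoerrorbound} supplies the reconstructed parabolic error bound, Assumption~\ref{spaceest} supplies the elliptic error bound, and the identity $\widetilde e=\widetilde\rho+\widetilde\epsilon$ together with the triangle inequality ties them together. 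I would also note in passing that the base case $m=1$ is consistent because $\widetilde\rho(t_0)=u_0-u_0=0$ was used in deriving~\eqref{psi}, so no separate treatment of the first interval is required.
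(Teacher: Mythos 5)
Your proof is correct and follows essentially the same route as the paper's: the decomposition $\widetilde{e}=\widetilde{\rho}+\widetilde{\epsilon}$, Proposition~\ref{pr:rhoerrorbound} for $\widetilde{\rho}$, Assumption~\ref{spaceest} for $\widetilde{\epsilon}$, and the monotonicity of $\{\theta_m\psi_m\}$ to collapse the maximum to $m=M$. Your explicit justification of that monotonicity (via $\theta_m\ge 1$ and the nonnegativity of the additive terms in~\eqref{psi}) spells out a step the paper merely asserts, which is a welcome addition but not a different argument.
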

\begin{proof}
Since $\delta_1,\ldots,\delta_M$ exist, then from Proposition \ref{pr:rhoerrorbound} we have the error bound
$\|\widetilde{\rho}\|_m \leq \theta_m\psi_m$
for any $1 \leq m \leq M$. Noting~\eqref{eq:theta} and~\eqref{psi}, we observe that the sequence $\{\theta_m\psi_m\}_{m\ge 1}$ is monotone increasing. Therefore, upon recalling the decomposition $\widetilde{e} = \widetilde{\rho} + \widetilde{\epsilon}$ we obtain
\begin{equation}
\notag
\max_{1 \leq m \leq M} \|\widetilde{e}\|_m \leq \max_{1 \leq m \leq M} \|\widetilde{\rho}\|_m + \max_{1 \leq m \leq M} \|\widetilde{\epsilon}\|_m \leq \theta_M \psi_M + \max_{1 \leq m \leq M} \|\widetilde{\epsilon}\|_m.
\end{equation}
The stated result then follows from Assumption \ref{spaceest}.
\end{proof}

In some sense, the temporal reconstruction $\widetilde{U}$ is a better approximation to $u$ than $U$, and so from a practical standpoint, it is often best to use Theorem \ref{maintheorem} directly. For some applications, however, a bound on the error rather than on the reconstructed error may be necessary. Such a bound follows immediately upon rewriting the error, viz., $e := u - U = \widetilde{e} + \widetilde{U} - U$, applying the triangle inequality, and exploiting the reconstructed error bound of Theorem \ref{maintheorem}.

\begin{corollary}
Suppose that $\delta_1, \ldots, \delta_M$ exist then the error of the dG-cG method \eqref{dg_method} satisfies the $\Linf(\Linf)$ a posteriori bound
\begin{equation}
\notag
\max_{1 \leq m \leq M} \|{e}\|_m \leq \theta_M \psi_M + C_{\infty}\|\eta_{\mathrm{space}}\|_{\Linf(0,t_M)} + \max_{1 \leq m \leq M} \|U - \widetilde{U}\|_m.
\end{equation}
\end{corollary}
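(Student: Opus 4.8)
The plan is to reduce the claim to Theorem~\ref{maintheorem} by a simple splitting of the true error into its reconstructed counterpart and the temporal reconstruction defect. First I would write, for each $t\in I_m$ and each $m\ge 1$,
\[
e(t) = u(t) - U(t) = \big(u(t) - \widetilde{U}(t)\big) + \big(\widetilde{U}(t) - U(t)\big) = \widetilde{e}(t) + \big(\widetilde{U}(t) - U(t)\big),
\]
recalling the definition $\widetilde{e} := u - \widetilde{U}$ from \S\ref{sc:erroranalysis}. Taking norms, applying the triangle inequality in $\Linf(\Omega)$ pointwise in $t$, and then passing to the supremum over $t\in I_m$ yields $\|e\|_m \le \|\widetilde{e}\|_m + \|\widetilde{U} - U\|_m$ for every $m$.

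Next I would take the maximum over $1\le m\le M$ on both sides, obtaining
\[
\max_{1 \le m \le M} \|e\|_m \;\le\; \max_{1 \le m \le M} \|\widetilde{e}\|_m \;+\; \max_{1 \le m \le M} \|\widetilde{U} - U\|_m.
\]
Finally, since $\delta_1,\ldots,\delta_M$ are assumed to exist, Theorem~\ref{maintheorem} applies and bounds the first term on the right-hand side by $\theta_M\psi_M + C_{\infty}\|\eta_{\mathrm{space}}\|_{\Linf(0,t_M)}$, which gives exactly the stated inequality.

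Since every nontrivial ingredient---the fixed point construction via $\Phi_m$, the existence of the local solution $\widetilde{\rho}$, and the recursive bookkeeping encoded in $\psi_m$ and $\theta_m$---is already packaged inside Proposition~\ref{pr:rhoerrorbound} and Theorem~\ref{maintheorem}, there is no genuine obstacle here; the only point worth stressing is that the splitting is performed at the level of $u - U$ rather than $u - \widetilde{U}$, so that the extra defect $\widetilde{U} - U = Q_m(\cdot)\timejump{U}_{m-1}$ on $I_m$ (explicitly available through Lemma~\ref{liftlemma}) enters as a genuinely computable additive contribution rather than being absorbed into $\widetilde{e}$.
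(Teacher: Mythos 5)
Your proof is correct and matches the paper's own argument exactly: the paper likewise derives the corollary by writing $e = \widetilde{e} + \widetilde{U} - U$, applying the triangle inequality, and invoking Theorem~\ref{maintheorem}. No gaps.
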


\section{Adaptive Algorithm}\label{sc:adaptive}

In order to determine when it is necessary to change the mesh and/or time step size, we need to define \emph{refinement indicators} based on Theorem \ref{maintheorem}. Asymptotically, we expect that only the terms which contribute to $\theta_M \psi_M$ are likely to play a significant role in the magnitude of the error bound. We therefore define our refinement indicators based only on the term $\psi_m$ from~\eqref{psi}. In order to characterise the new refinement indicators, we define the product
$
\widetilde{\theta}_m := \prod_{i = 0}^m \theta_i, \,m \ge 0.
$
For a blow-up problem, $\widetilde{\theta}_m$ plays the role of approximating the rate of blow-up, cf. \cite{KMW16}. It is clear that we should use the time estimator, $\eta_{\mathrm{time}}^m$, in order to select the time step size. To this end, if the goal is to minimise the estimator in Theorem \ref{maintheorem} then the scaling nature of the estimator suggests that we demand that
\[
\mathrm{ref}^m_{\mathrm{time}} := \widetilde{\theta}_m^{-1}\eta_{\mathrm{time}}^m \leq {\tt ttol},\qquad m\ge 1,
\]
is satisfied where $\text{\tt ttol}>0$ is the \emph{temporal refinement threshold}. This choice is equivalent to using $\eta_{\mathrm{time}}^m$ for the temporal refinement indicator while increasing the temporal refinement threshold by $\theta_m$ after each time step. Practically speaking, this condition means that the indicators are allowed to increase in proportion to how close we are to the blow-up time. One can imagine that if we did not enforce such a condition that the time step lengths and spatial mesh sizes would become unpractically small as we approach the blow-up time. Given that we are dealing with blow-up problems which are unlikely to necessitate larger time step sizes as we approach the blow-up time, we do not see the need to introduce temporal coarsening.

The spatial terms in $\psi_m$ are based on the space estimators in Assumption \ref{spaceest} which are evaluated on the union of the current mesh $\mathcal{T}_m$ with the previous two meshes $\mathcal{T}_{m-1}$ and $\mathcal{T}_{m-2}$.  Given that each element $K \in \mathcal{T}_m$ is either contained in the union mesh directly or has subelements $\widecheck{K}$ contained in the union mesh, and that our error estimator is based on the $\Linf$ norm, it is natural to set the spatial refinement indicator to
\[
\mathrm{ref}^m_{\mathrm{space}} \big|_K := \widetilde{\theta}_m^{-1}k_m^{-1}\max_{\widecheck{K} \subset K} \Lambda^m_{\widecheck{K}},
\]
where
\[
\Lambda^m_{\widecheck{K}}:=
\int_{I_m} \mathfrak{L}(s,\|\widetilde{U}(s)\|,\|\widetilde{U}(s)\|+\eta_{\mathrm{space}}(s))\,\eta_{\mathrm{space}}(s)\big|_{\widecheck{K}} \dif s+\int_{I_m} {\accentset{\bigcdot}{\eta}}_{\mathrm{space}}(s)\big|_{\widecheck{K}} \dif s.
\]
Note that we divide by the time step length to normalize the spatial refinement indicator in order to ensure that it is temporally independent. We then demand that
\[
{\tt stol^-} \leq \mathrm{ref}^m_{\mathrm{space}} \big|_K  \leq {\tt stol^+}
\]
is satisfied for all $K \in \mathcal{T}_m$. Here, $\textrm{\tt stol$^+$}>0$ is the \emph{spatial refinement threshold} and $\textrm{\tt stol$^-$}>0$ is the \emph{spatial coarsening threshold}. Based on expected rates of convergence, we set ${\tt stol}^- = 0.1\times2^{-p}\times{\tt stol^+}$  in our numerical experiments where $p$ is the spatial polynomial degree in~\eqref{eq:FEspace}. For the elliptic error estimator $\mathcal{E}$ in Assumption \ref{as:ell} which forms the basis of the space estimators $\eta_{\mathrm{space}}$ and ${\accentset{\bigcdot}{\eta}}_{\mathrm{space}}$, we use \cite{L14} for $d = 1$ and \cite{DK14} otherwise.

The dG-cG method \eqref{dg_method} is both implicit and possibly nonlinear; as such, it is necessary to use an iterative method to solve the underlying discrete problem. Standard choices include Picard and Newton iterations. Incidentally, Picard iteration was observed to be faster because the system matrix does not need to be rebuilt at each iteration despite the fact that it requires more iterations. In addition, Picard iteration allows for the use of preconditioners built specifically for the dG-cG method, cf., e.g., \cite{S17}.

With the refinement indicators in place, we are now ready to outline our adaptive strategy. Firstly, we need to refine the initial coarse input mesh and time step length. Ideally, we would use the indicators directly, however, a certain amount of spatial resolution of the initial condition is needed for the temporal refinement indicator to give reasonable values. Therefore, we begin by computing the energy projection $\pi_1 u_0$ of $u_0$, cf.~Remark~\ref{rem:init}, and refining the initial mesh $\mathcal{T}_1$ until 
\[
\|u_0 - \pi_1 u_0\| \leq {\tt stol^-}
\]
is satisfied. Next, we continue by recomputing the numerical solution $U |_{I_1}$ and refining the initial mesh $\mathcal{T}_1$ and time step length $k_1$ until $\mathrm{ref}_{\mathrm{time}}^1 < {\tt ttol}$ and $\mathrm{ref}_{\mathrm{space}}^1 \big|_K < {\tt stol^+}$ for all $K \in \mathcal{T}_1$.

For a general time step $m>1$, we first set $\mathcal{T}_m \gets \mathcal{T}_{m-1}$ and $k_m \gets k_{m-1}$. We then compute the numerical solution $U |_{I_m}$ and calculate the refinement indicators $\mathrm{ref}_{\mathrm{space}}^m$ and $\mathrm{ref}_{\mathrm{time}}^m$. If $\mathrm{ref}_{\mathrm{time}}^m > {\tt ttol}$, we set $k_m \gets \nicefrac{k_{m-1}}{2}$. We also refine all elements $K \in \mathcal{T}_m$ with $\mathrm{ref}_{\mathrm{space}}^m \big|_K > {\tt stol^+}$ and coarsen all elements $K \in \mathcal{T}_m$ with $\mathrm{ref}_{\mathrm{space}}^m \big|_K < {\tt stol^-}$. If the mesh or time step length has changed then we recompute the numerical solution $U |_{I_m}$. Once the refinement stage is complete, we recompute the estimators if necessary so that we can evaluate $\psi_m$ in \eqref{psi}. We then attempt to compute $\delta_m\in [1,\infty)$, the smallest root of \eqref{phi}, via a Newton iteration scheme. If the Newton method converges, we continue to the next time step; otherwise, we terminate the adaptive algorithm.

\section{Numerical Experiments}\label{sc:num}

The code for the adaptive algorithm is based on the {\tt deal.II} finite element library version 9.2.0 \cite{DIIv1, BHK07} and may be found via \href{https://github.com/smetcalfephd/Blow-up-in-the-Semilinear-Heat-Equation/}{\color{blue}this link}.

\subsection{Example 1: Point Blowup}
Let $\Omega = (-5,5)^2$, $\diff = 1$, $f(u) = u^2$, and choose the initial condition to be the Gau{\ss}ian blob given by $u_0(x,y) = 10\exp(-2x^2-2y^2)$. The blow-up set for this example consists of only a single point (the origin) which is aligned with the mesh making it spatially simple. For any $v_1, v_2 \in \mathbb{R}$ we have 
\begin{equation*}
\begin{split}
|f(v_1)-f(v_2)| = |v_1^2 - v_2^2 | \leq |v_1 - v_2|(|v_1|+|v_2|).
\end{split}
\end{equation*}
Therefore, we have $\mathfrak{L}(|v_1|,|v_2|) = |v_1| + |v_2|$ in \eqref{eq:Lip} and so $\delta_m$ (if it exists) is the smallest root of the function $\phi_m:[1,\infty) \to \mathbb{R}$ given by
\begin{equation}
\begin{split}
\label{deltaquad}
\phi_m(\delta) &= 1+ \delta\!\left[\int_{I_m} L(s, \delta) \dif s -1 \right]\\
& = 1 + \delta\!\left[2\int_{I_m} \Big[\|\widetilde{U}(s)\| + C_{\infty}\eta_{\mathrm{space}}(s)\Big] \dif s -1  \right]+2k_m\psi_m\delta^2,
\end{split}
\end{equation}
cf.~\eqref{phi}. In this case, we can calculate $\delta_m$ explicitly via the quadratic formula so there is no need to use a root finding algorithm here.

\begin{figure}
\centering
\includegraphics[scale=0.275]{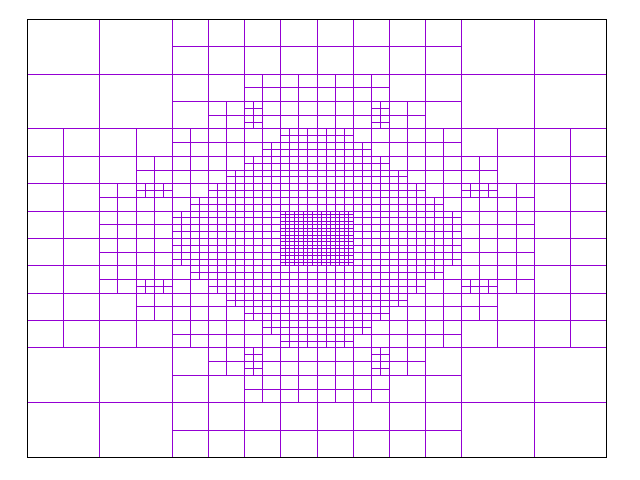} \includegraphics[scale=0.275]{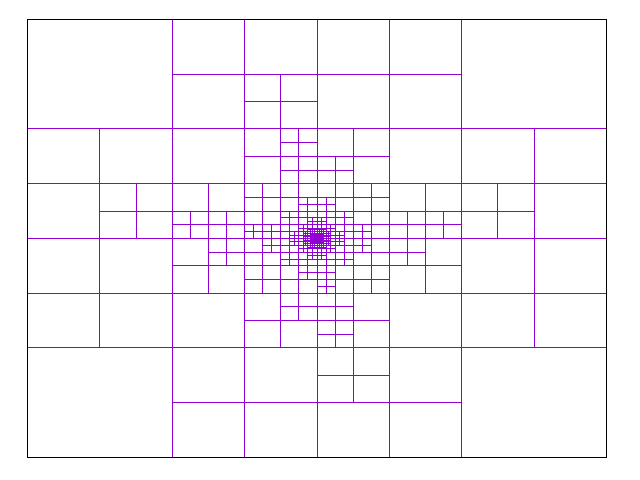}
\\[2ex]
\includegraphics[scale=0.61]{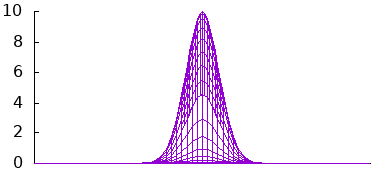}\hfill\includegraphics[scale=0.45]{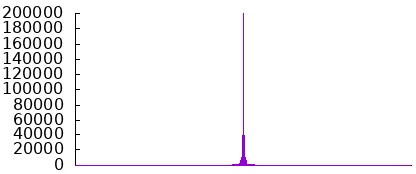}
\caption{Example 1: Initial grid (top left) with $h_{\min} = 0.110485$, final grid (top right) with $h_{\min} = 0.006905$, initial solution profile (bottom left), and final solution profile (bottom right).}
\label{example1solutions}
\end{figure}

With $\delta_m$ defined, we can now apply the adaptive algorithm to this numerical example. We use a finite element space with polynomials of degree $p = 8$ and as small a spatial threshold ${\tt stol}^+$ as is permissible (i.e. as small as can be chosen in the adaptive algorithm until floating point errors start to dominate the computation of the spatial refinement indicator $\mathrm{ref}_{\mathrm{space}}^m$). In time, we allow the polynomial degree ($r = 1, 2, 3$) and the temporal threshold ${\tt ttol}$ to vary in order to see the respective impact on how close we can get to the blow-up time. The results, given in Table \ref{example1results}, clearly illustrate the power of increasing the temporal polynomial degree with $\|U(t^-_N)\|$ being of order $10^3$ for $r = 1$ and of order $10^5$ for $r = 2$ using a similar number of time steps;  likewise, the blow-up time error $|T_{\infty} - t_N|$ shows a similar dramatic effect being of order $10^{-4}$ for $r = 1$ and of order $10^{-6}$ for $r = 2$. For $r = 3$, we note no significant advancement to the blow-up time when compared to $r = 2$ with the primary reason being the inability of the space derivative estimator ${\accentset{\bigcdot}{\eta}}_{\mathrm{space}}$ to be reduced below order $10^{-9}$ owing to floating point errors; by contrast, the time estimator is already several magnitudes smaller being of order $10^{-12}$ in the initial stages of the final computation for $r = 3$. We also include, in Figure \ref{example1solutions}, the spatial meshes and solution profiles at the initial and final times from the last computational run for $r = 3$ which show that the space estimator is choosing to refine the area in the vicinity of the singularity and coarsen elsewhere as expected.

\begin{table}
\begin{center}
\begin{tabular}{cccc} \hline
\toprule
$N$  & 	$\|U(t^-_N)\|$    & 	$t_N$ & $|T_{\infty}-t_N|$\\ \midrule 
4 & 14.1317	& 0.100000 &  1.17e-1 \\ 
31 & 54.8833 & 0.193750 &  2.33e-2\\  
69 &	155.803 & 0.209375 & 7.65e-3 \\  
150 &	632.415 & 0.215234 & 1.79e-3 \\  
315 & 2108.30 & 0.216504 & 5.25e-4\\  
654 &	6856.42 & 0.216870 & 1.59e-4 \\ 
\bottomrule 
\end{tabular}\hfill
\begin{tabular}{cccc} 
\toprule 
$N$  & 	$\|U(t^-_N)\|$    & 	$t_N$ & $|T_{\infty}-t_N|$\\ \midrule 
5 & 16.8504	& 0.125000 & 9.20e-2  \\ 
44 & 522.263 & 0.214844 & 2.19e-3\\ 
87 & 1783.33  & 0.216406 & 6.23e-4  \\ 
152 & 9867.94 & 0.216919 & 1.10e-4  \\ 
296 & 41288.6 & 0.217003 & 2.59e-5  \\ 
609 & 155205 & 0.217022 & 6.83e-6 \\ 
\bottomrule
\end{tabular}\\[3ex]
\begin{tabular}{cccc} \toprule
$N$  & 	$\|U(t^-_N)\|$    & 	$t_N$ & $|T_{\infty}-t_N|$\\ \midrule 
3 & 16.8504 & 0.125000 &   9.20e-2  \\  
22 & 257.817 & 0.212500 & 4.53e-3 \\  
63 & 8091.38 & 0.216895 &  1.34e-4 \\  
156 & 116448 & 0.217020 & 9.12e-6 \\  
261 & 139696 & 0.217021 & 7.59e-6 \\  
451 & 199573 & 0.217023 & 5.30e-6 \\
\bottomrule
\end{tabular}\\[3ex]
\end{center}
\caption{Example 1: Performance data for $r = 1$ (top left), $r = 2$ (top right) and $r = 3$ (bottom).}\label{example1results}
\end{table}

\begin{remark}\label{blow-uprateremark} The results displayed in Table \ref{example1results} utilise $T_{\infty}$ which is not analytically known, however, we can exploit the numerical solution to obtain an approximation. Indeed, it is known that the exact solution $u$ to this blow-up problem satisfies
$$\|u(\cdot, t)\| = C|T_{\infty} - t|^{-\gamma(t)},$$
for some constant $C > 0$ with $\gamma(t) \to 1$ as $t \to T_{\infty}$, cf. \cite{MZ98, MZ00} (``The blow-up rate of the PDE tends to that of the corresponding ODE''). Making the assumption that the numerical solution $U$ also satisfies the above relation implies that 
$$
\frac{\|U(t^-_n)\|}{\|U(t^-_{n-1})\|} = \bigg(\frac{T_{\infty} - t_{n-1}}{T_{\infty} - t_n}\bigg)^{\gamma_n}\qquad \textrm{with }\gamma_n\xrightarrow{n\to\infty}1.
$$
If the two data points are chosen close to the blow-up time then setting $\gamma_n = 1$ should give a good approximation $\widetilde{T}_{\infty}$ to $T_{\infty}$. Rearranging gives the estimate
\[
T_{\infty} \approx \widetilde{T}_{\infty} := \frac{t_n\|U(t^-_n)\| - t_{n-1}\|U(t^-_{n-1})\|}{\|U(t^-_n)\| - \|U(t^-_{n-1})\|},
\]
which yields the estimate $T_{\infty} \approx 0.21702877415$ using the last two data points from the final computational run for $r = 3$. This approximation can also be back-substituted to approximate the numerical blow-up rate $\gamma_n$ via
$$
\gamma_n \approx \frac{\log\|U(t^-_n)\| - \log\|U(t^-_{n-1})\|}{\log(\widetilde{T}_{\infty} - t_{n-1}) - \log(\widetilde{T}_{\infty} - t_n)},
$$
which for points $t_{n-1}$, $t_n$ ``far enough away'' from $\widetilde{T}_{\infty}$ should give a reasonably accurate answer.
\end{remark}

\begin{figure}
\centering
\includegraphics[scale=0.29]{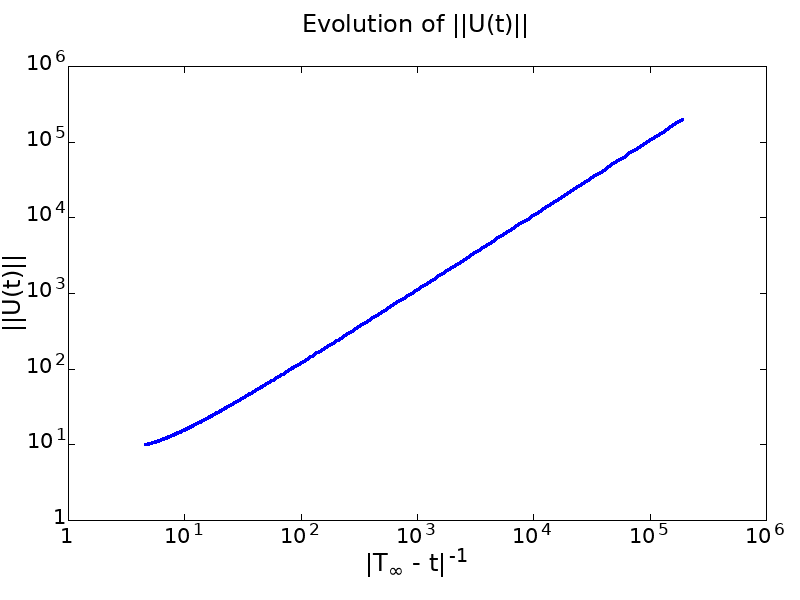} \hfill\includegraphics[scale=0.29]{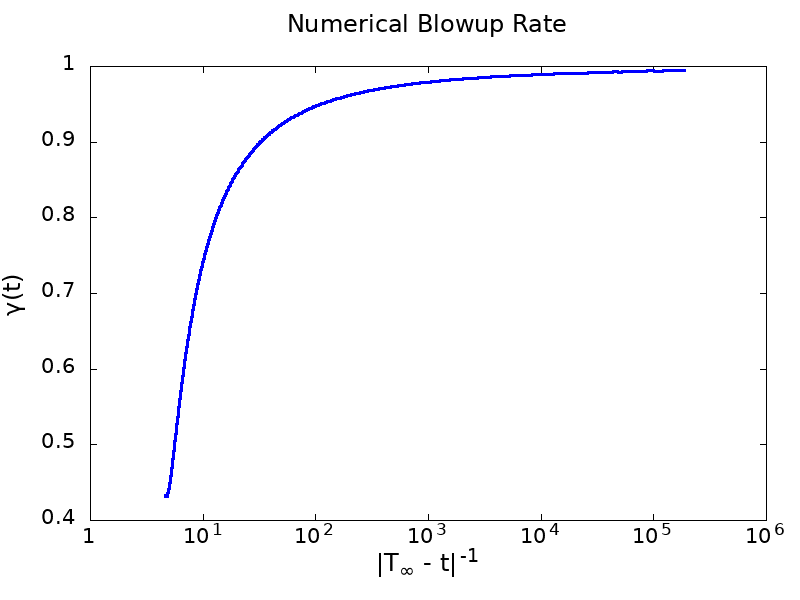}
\\[3ex]
\includegraphics[scale=0.29]{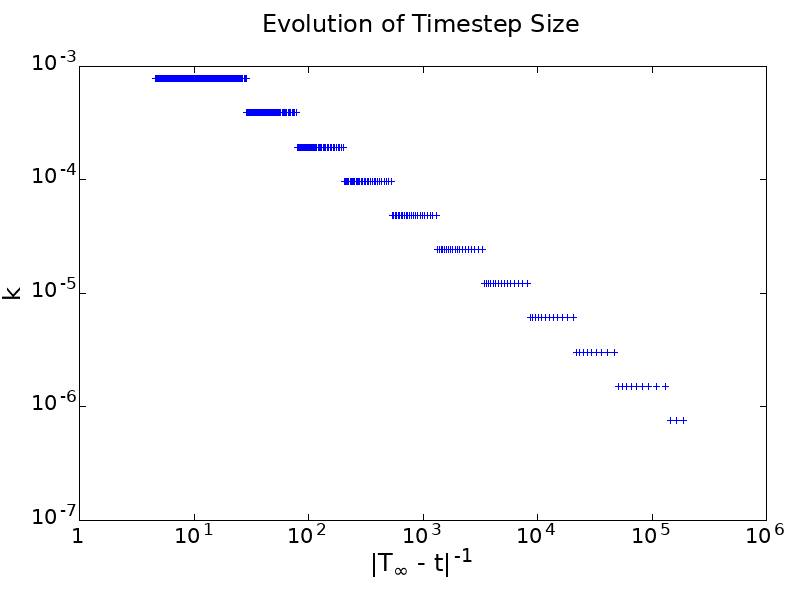}\hfill \includegraphics[scale=0.29]{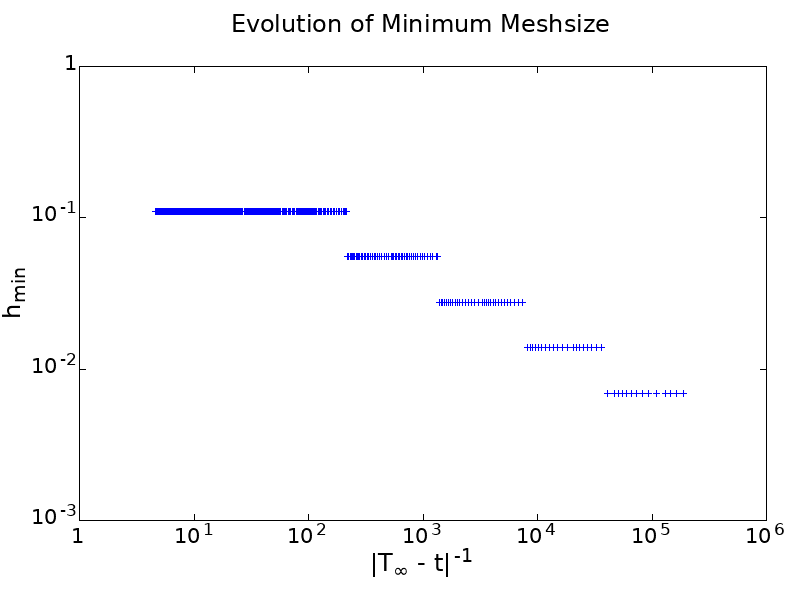}
\caption{Example 1: Behaviour of the numerical solution $\|U(t)\|$ (top left), the numerical blow-up rate $\gamma(t)$ (top right), the time step size $k$ (bottom left), and the minimum mesh size $h_{\min}$ (bottom right) as $t \to T_{\infty}$.}
\label{example1dataplots}
\end{figure}

For the final computational run for $r = 3$ we also include further data which may be of interest in Figure \ref{example1dataplots} with all the quantities of interest plotted against the inverse of the distance from the (approximate) blow-up time $|T_{\infty} - t|^{-1}$ on a logarithmic scale. Firstly, we plot the $\Linf$ norm of the numerical solution $U(t)$ from which it looks like
$(T_{\infty}-t)\|U(t)\| \to C$ as $t \to T_{\infty}$
for some constant $C > 0$ as expected theoretically; this can be seen more readily in the plot of the numerical blow-up rate (see Remark \ref{blow-uprateremark}) which confirms that $\gamma(t) \to 1$ as $t \to T_{\infty}$. These results imply that for the given initial condition the solution is not initially in blow-up phase (since $\gamma(t) \ll 1$ for $t\approx 0$)  and that the nonlinearity needs a significant amount of time before it overcomes the dynamics of the diffusion (expressed by the Laplacian). Out of interest, we also plot the distribution of time step sizes $k$ and minimum mesh sizes $h_{\min} := \min_{K \in \mathcal{T}} h_K$ for this run; other than the fact that both $k$ and $h_{\min}$ register a decrease graded towards $T_{\infty}$ as $t \to T_{\infty}$, it is not obvious to draw a general conclusion. Although not plotted, we did observe that larger temporal polynomial degrees correspond to a larger variability in the distribution of time steps.

\subsection{Example 2: Manifold Blowup}

Let $\Omega = (-10,10)^2$, $\diff = 1$, $f(u) = u^2$ and the ``volcano'' type initial condition be prescribed by the function $u_0(x,y) = 10(x^2+y^2)\exp(-(x^2+y^2)/2)$. The blow-up set for this example is a circle of radius $\sqrt{2}$ centered at the origin which presents two challenges for the adaptive algorithm: firstly, it is a one dimensional manifold rather than a point singularity, so it is going to require more degrees of freedom to resolve; secondly, the blow-up set is not aligned with the mesh and so if resolution of the blow-up set is lost, it could cause the numerical method to fail. Both of these points imply that this example constitutes a good test of the spatial capabilities of the adaptive algorithm. We remark that as the nonlinearity here is the same as in Example 1, $\delta_m$ is again the smallest root of \eqref{deltaquad}.

We again use polynomials of degree $p = 8$ in space with as small a spatial threshold ${\tt stol}^+$ as is permissible. Moreover, we allow the polynomial degree in time ($r = 1$, 2) and the temporal threshold ${\tt ttol}$ to vary in order to see the impact this has on how close we can get to the blow-up time. Applying the procedure discussed in Remark \ref{blow-uprateremark}, we obtain an approximate blow-up time of $T_{\infty} \approx 0.16646111$ for this numerical example. The results, given in Table \ref{example2results}, clearly illustrate the power of increasing the temporal polynomial degree with $\|U(t^-_N)\|$ being of order $10^6$ in 948 time steps for $r = 1$ but in only 221 time steps for $r = 2$ with similar results observed for the blow-up time error $|T_{\infty} - t_N|$. We were unable to observe further progress for $r = 2$  after the second to last computational run because of significant memory requirements;  indeed, the final numerical run had $3\times 10^6$ spatial degrees of freedom and $r+1=3$ temporal degrees of freedom for a total of $9\times 10^6$ total degrees of freedom at the end of the computation. We also include, in Figure \ref{example2solutions}, the spatial meshes and solution contour plots at the initial and final times from the last computational run for $r = 2$ which show that the space estimator is choosing to heavily refine the area in the vicinity of the circular singularity and coarsen elsewhere as expected.

\begin{figure}[h]
\centering
\begin{tabular}{c@{\hspace{1ex}}c}
\includegraphics[scale=0.41]{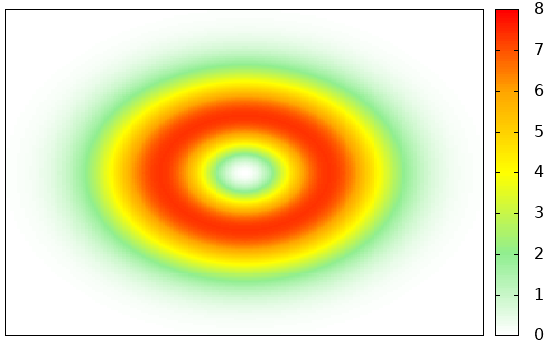}&\includegraphics[scale=0.41]{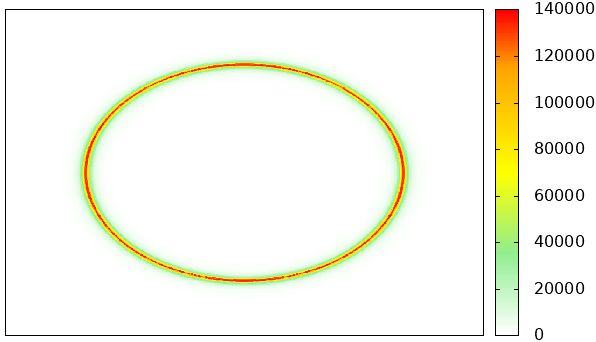}\\[3ex]
\includegraphics[scale=0.39]{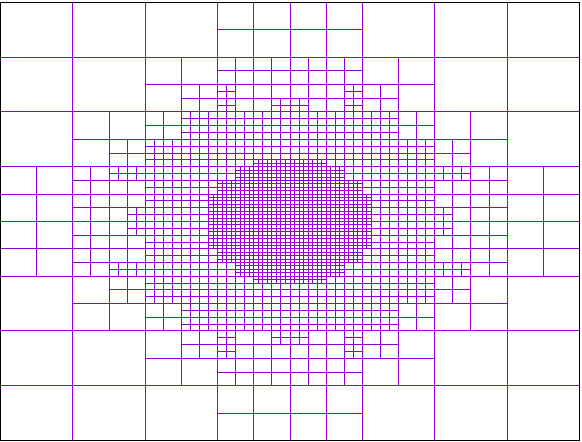}&\includegraphics[scale=0.39]{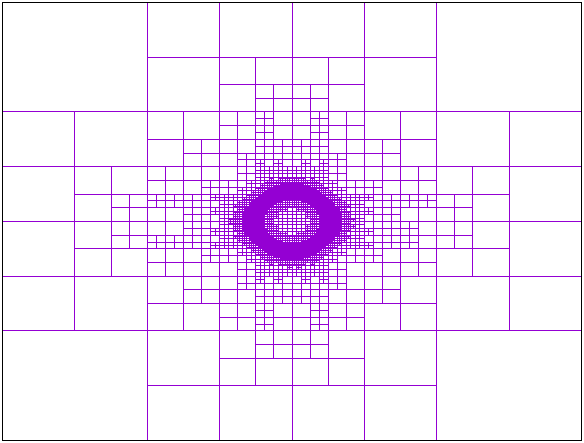}\\[3ex]
\multicolumn{2}{c}{\includegraphics[scale=0.47]{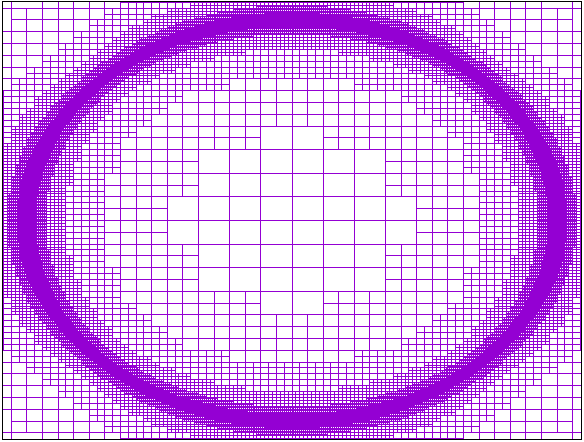}}
\end{tabular}
\caption{Example 2: The initial solution (top left), the final solution (top right), the initial mesh (center left), the final mesh (center right), and final mesh in the vicinity of the singularity (bottom).}
\label{example2solutions}
\end{figure}

\begin{table}
\begin{center}
\begin{tabular}{cccc} \toprule
$N$  & 	$\|U(t^-_N)\|$    & 	$t_N$ & $|T_{\infty}-t_N|$\\ \midrule 
14 & 48.0293 & 0.143750 &  2.27e-2  \\ 
38 & 267.104 & 0.162500 & 3.96e-3 \\  
88 &	1014.01 & 0.165430 & 1.03e-3 \\  
198 &	4147.83 & 0.166211 & 2.50e-4 \\  
435 & 24182.8 & 0.166418 & 4.27e-5 \\  
948 &	97237.1 & 0.166451 & 1.06e-5 \\ \bottomrule 
\end{tabular}\hfill
\begin{tabular}{cccc} \toprule
$N$  & 	$\|U(t^-_N)\|$    & 	$t_N$ & $|T_{\infty}-t_N|$\\ \midrule 
3 & 17.1009	& 0.100000 & 6.65e-2 \\ 
16 & 192.648 & 0.160938 & 5.52e-3 \\ 
28 & 439.572 & 0.164063 & 2.40e-3 \\ 
64 & 6794.02  & 0.166309 & 1.53e-4  \\ 
221 & 113665 & 0.166452 & 9.09e-6 \\ 
{\it 404} & {\it136719} & {\it0.166454} & {\it 7.56e-6} \\ \bottomrule
\end{tabular}\\[3ex]
\end{center}
\caption{Example 2: Results of the numerical experiments for $r = 1$ (left) and $r = 2$ (right). The \textit{italic} result indicates a lack of progression due to memory.}\label{example2results}
\end{table}

For the final computational run for $r = 2$ we also include further data which may be of interest in Figure \ref{example2dataplots} with all the quantities of interest plotted against the inverse of the distance from the (approximate) blow-up time $|T_{\infty} - t|^{-1}$ on a logarithmic scale. Firstly, we plot the numerical blow-up rate (see Remark \ref{blow-uprateremark}) from which we observe that $\gamma(t) \to 1$ as $t \to T_{\infty}$; unlike in Example 1, we notice some oscillations likely caused by the heavy amount of mesh refinement taking place, and the numerical blow-up rate also briefly rises above~1 near the end of the computation due to inaccuracies in our approximation of $T_{\infty}$. Furthermore, in contrast to Example 1, the solution to \eqref{model_strong} for Example 2 seems to be in blow-up phase right from the initial stages as $\gamma(t) \approx 0.9$ for $t \approx 0$. Out of interest, we also plot the distribution of spatial degrees of freedom (DoFS), time step sizes $k$ and minimum mesh sizes $h_{\min} := \min_{K \in \mathcal{T}} h_K$ for this run. In contrast to Example 1, we observe a significant increase in spatial degrees of freedom as $t \to T_\infty$ from around 150,000 required to resolve the initial condition to over $3\times 10^6$ at termination though this is to be expected given that the singularity in Example 2 is one dimension larger than the singularity in Example 1; this can also be seen by comparing the plots of the minimum mesh sizes with those from Example 1, cf. Figure \ref{example1dataplots}. Broadly speaking, the distribution of time steps is comparable to that observed in Example 1, however, Example 1 has more more variability due to using $r = 3$.

\begin{figure}
\centering
\includegraphics[scale=0.295]{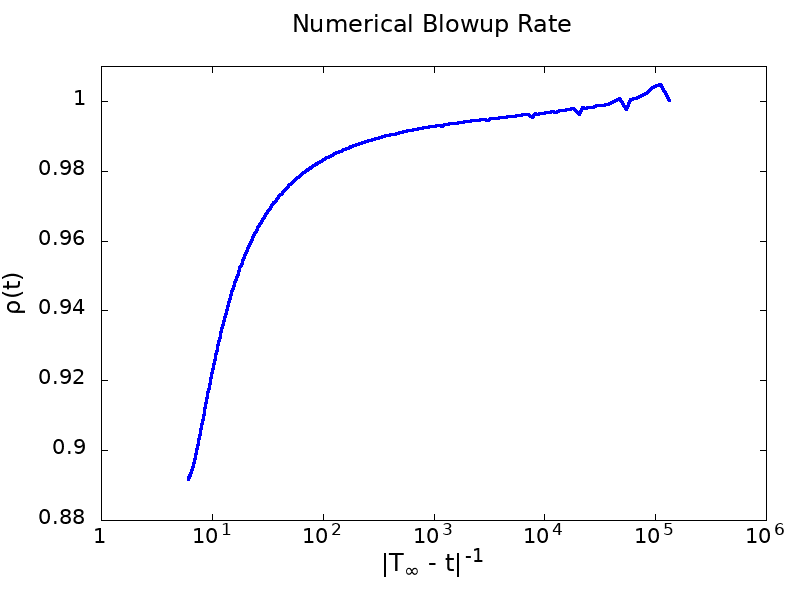} \includegraphics[scale=0.295]{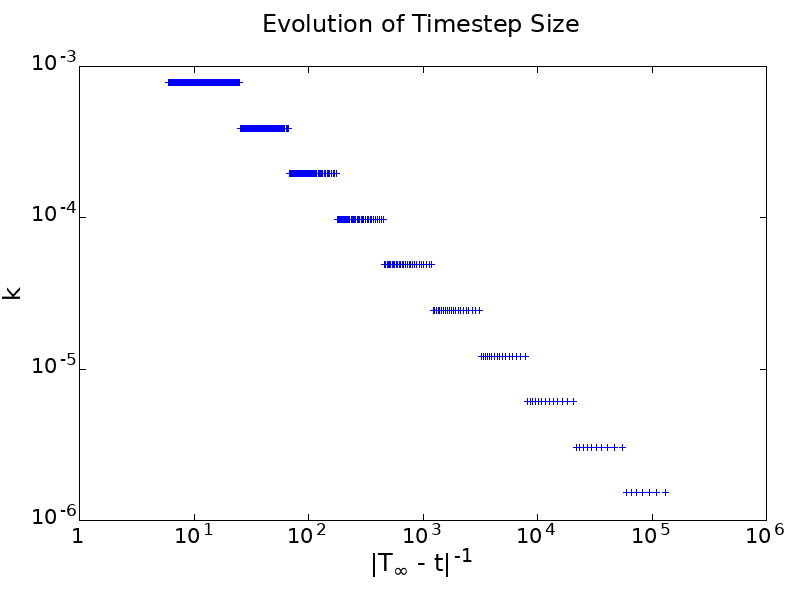}
\\[3ex]
\includegraphics[scale=0.295]{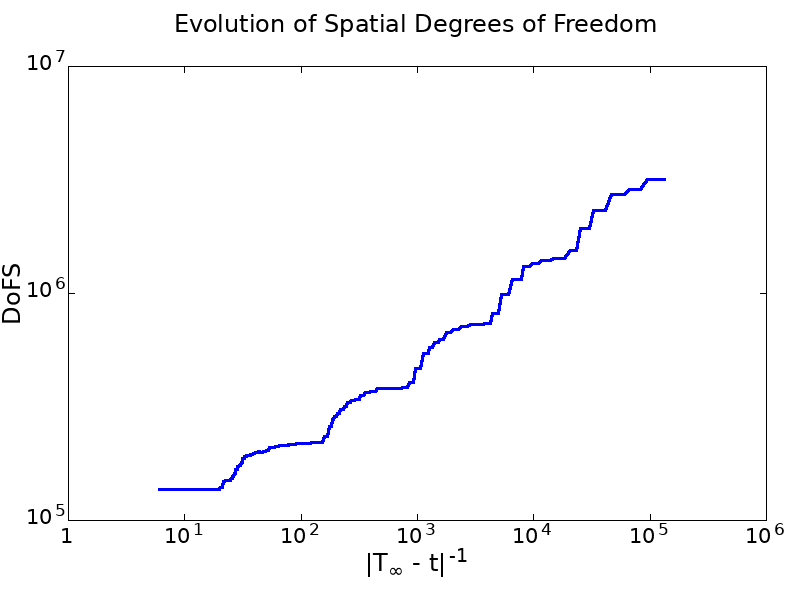} \includegraphics[scale=0.295]{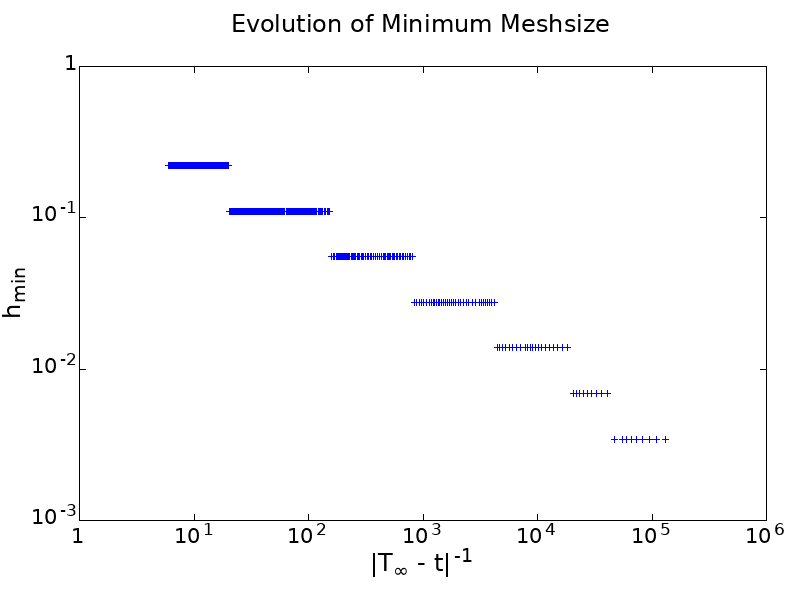}
\caption{Example 2: The numerical blow-up rate $\gamma(t)$ (top left), the time step size $k$ (top right), spatial degrees of freedom (bottom left) and the minimum mesh size $h_{\min}$ (bottom right) as $t \to T_{\infty}$.}
\label{example2dataplots}
\end{figure}

\subsection{Example 3:  Point Blowup ($hp$-adaptivity)}

For this example, we use the same data as Example 1 (point blow-up) with the intention of trying out an $hp$-type adaptive strategy utilising the \emph{a posteriori} error estimator. The adaptive algorithm is also similar to the previous two examples, however, now we employ a \emph{(linear) temporal polynomial degree distribution} according to
$$r_m = \max\{0,\,\lceil r_0 + \sigma \log(\nicefrac{k_m}{k_0})\rceil \},\qquad m\ge 1,$$
where $\sigma = 0.47$ is a parameter which determines at what time step size the polynomial degree is decreased and $\lceil \cdot \rceil$ is the rounding up function. This idea is inspired by an \emph{a priori} $hp$-strategy for the resolution of (algebraic) start-up singularities in linear parabolic problems which has been proven to yield exponential convergence rates; see, e.g.~\cite{SS00}. As a decrease in the temporal polynomial degree close to the blow-up time could end up with the temporal error actually increasing on that time interval, we set $k_m \gets \nicefrac{k_{m-1}}{4}$ if the polynomial degree changes in time instead of the usual $k_m \gets \nicefrac{k_{m-1}}{2}$. We remark that this idea is fundamentally different from \cite{KMW16} where the temporal polynomial degree is increased towards the blow-up time (which has already been proven to yield exponential convergence to the blow-up time in ODEs \cite{WY20}) whereas here we decrease the polynomial degree towards the blow-up time thereby saving a considerable number of degrees of freedom as $k_m\to 0$.

As in the previous examples, we choose polynomials of degree $p=8$ in space with a spatial threshold $\tt{stol}^+$ which is chosen as small as is permissible. In time, our experiments begin with polynomials of degree $r_0 = 1$ and a temporal threshold of ${\tt ttol} = 10^{-3}$; we then do three additional computational runs by reducing the temporal threshold by a factor of 100 while increasing the initial temporal polynomial degree by 1 until $r_0 = 4$ and ${\tt ttol} = 10^{-9}$. The results of these runs, given in Figure \ref{example3plots}, show that we initially observe \emph{exponential convergence} to the blow-up time as expected, however, the final data point breaks this trend which indicates spatial effects become dominant in the final computational run; here, we also plot the distribution of time step sizes and polynomial degrees versus the inverse of the distance to the blow-up time $|T_{\infty} - t|^{-1}$ showing that the time estimator and our proposed polynomial degree strategy yields a reasonable distribution of time step sizes and polynomial degrees graded towards the blow-up time.

\begin{figure}
\centering
\includegraphics[scale=0.3]{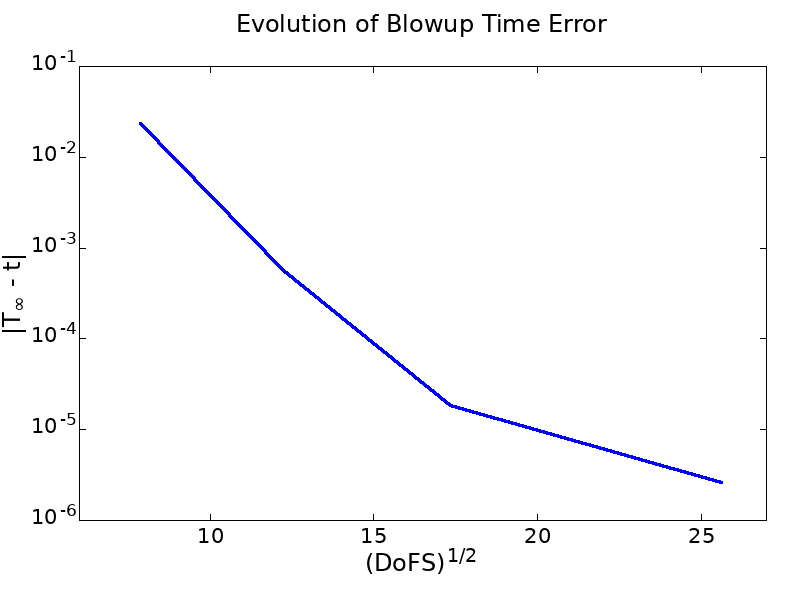}\includegraphics[scale=0.3]{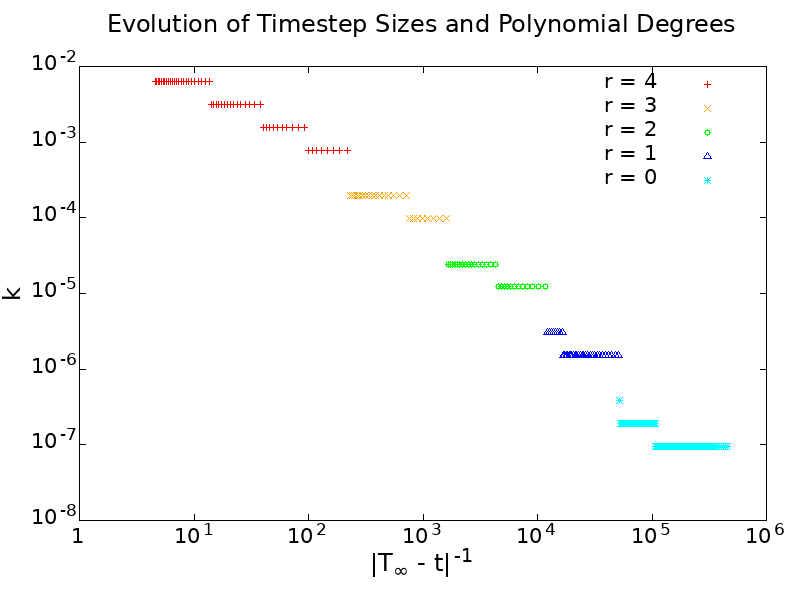}
\caption{Example 3: Blow-up time error $|T_{\infty} - t|$ vs. $\sqrt{\textrm{(Temporal) DoFS}}$ (left) and the distribution of the time step sizes $k$ and polynomial degrees $r$ as $t \to T_{\infty}$ for the final computational run (right).}
\label{example3plots}
\end{figure}

\section{Conclusions}\label{sc:concl}

In this work we have derived a \emph{conditional a posteriori} error bound (Theorem \ref{maintheorem}) for the dG-cG discretisation \eqref{dg_method} of the semilinear heat equation \eqref{model_strong}. Numerical experiments indicate that the a posteriori error estimator performs well when driving adaptivity for problems with two different types of singularities: single point (Example 1) and manifold (Example 2) with temporal refinement graded towards the blow-up time and spatial refinement graded towards the singularity in both cases. In addition, we observed exponential convergence towards the blow-up time when using a temporal $hp$-version dG time stepping scheme for the blow-up problem with the point singularity (Example 3); here, the exploitation of a highly effective distribution of temporal degrees of freedom permits us to start with a relatively high polynomial degree in the initial phase (yielding spectrally accurate approximations) and then reduce the approximation order appropriately closer to the blow-up time. The primary practical limitation of the method appears to be floating point errors in the computation of the space derivative estimator $\displaystyle{\accentset{\bigcdot}{\eta}}_{\mathrm{space}}$. Future work includes the implementation of space-time $hp$ adaptivity for these types of problems.

\bibliographystyle{amsplain}
\bibliography{paper1}

\providecommand{\bysame}{\leavevmode\hbox to3em{\hrulefill}\thinspace}
\providecommand{\MR}{\relax\ifhmode\unskip\space\fi MR }
\providecommand{\MRhref}[2]{%
  \href{http://www.ams.org/mathscinet-getitem?mr=#1}{#2}
}
\providecommand{\href}[2]{#2}
\begin{thebibliography}{10}

\bibitem{AkMaNo09}
Georgios Akrivis, Charalambos Makridakis, and Ricardo~H. Nochetto,
  \emph{Optimal order a posteriori error estimates for a class of
  {R}unge-{K}utta and {G}alerkin methods}, Numerische Mathematik \textbf{114}
  (2009), 133--160.

\bibitem{DIIv1}
Daniel Arndt, Wolfgang Bangerth, Bruno Blais, and et~al., \emph{The deal.{II}
  library, {V}ersion 9.2}, J. Numer. Math. \textbf{28} (2020), no.~3, 131--146.
  \MR{4151106}

\bibitem{BHK07}
Wolfgang Bangerth, Ralf Hartmann, and Guido Kanschat, \emph{{deal.II} -- a
  general purpose object oriented finite element library}, ACM Transactions on
  Mathematical Software \textbf{33} (2007), no.~4, 24/1--24/27.

\bibitem{BR03}
Wolfgang Bangerth and Rolf Rannacher, \emph{Adaptive finite element methods for
  differential equations}, Lectures in Mathematics, ETH Z\"urich, Birkh\"auser
  Verlag, Basel, 2003.

\bibitem{BKM13}
Eberhard B{\"a}nsch, Fotini Karakatsani, and Charalambos Makridakis, \emph{The
  effect of mesh modification in time on the error control of fully discrete
  approximations for parabolic equations}, Applied Numerical Mathematics
  \textbf{67} (2013), 35--63.

\bibitem{B05}
S{\"o}ren Bartels, \emph{A posteriori error analysis for time-dependent
  {G}inzburg-{L}andau type equations}, Numerische Mathematik \textbf{99}
  (2005), no.~4, 557--583.

\bibitem{BK88}
Marsha Berger and Robert~V. Kohn, \emph{A rescaling algorithm for the numerical
  calculation of blowing-up solutions}, Communications on Pure and Applied
  Mathematics \textbf{41} (1988), no.~6, 841--863.

\bibitem{BHR96}
Chris~J. Budd, Weizhang Huang, and Robert~D. Russell, \emph{Moving mesh methods
  for problems with blow-up}, SIAM Journal on Scientific Computing \textbf{17}
  (1996), no.~2, 305--327.

\bibitem{CGKM16}
Andrea Cangiani, Emmanuil~H. Georgoulis, Irene Kyza, and Stephen Metcalfe,
  \emph{Adaptivity and blow-up detection for nonlinear evolution problems},
  SIAM Journal on Scientific Computing \textbf{38} (2016), no.~6, A3833--A3856.

\bibitem{CGS20}
Andrea Cangiani, Emmanuil~H. Georgoulis, and Mohammad Sabawi, \emph{A
  posteriori error analysis for implicit--explicit hp-discontinuous {G}alerkin
  timestepping methods for semilinear parabolic problems}, Journal of
  Scientific Computing \textbf{82} (2020), no.~2, 26.

\bibitem{CM08}
Eduardo Cuesta and Charalambos Makridakis, \emph{A posteriori error estimates
  and maximal regularity for approximations of fully nonlinear parabolic
  problems in {B}anach spaces}, Numerische Mathematik \textbf{110} (2008),
  257--275.

\bibitem{DD86}
Michel~C. Delfour and Fran\c{c}ois Dubeau, \emph{Discontinuous polynomial
  approximations in the theory of one-step, hybrid and multistep methods for
  nonlinear ordinary differential equations}, Mathematics of Computation
  \textbf{47} (1986), no.~175, 169--189, S1--S8.

\bibitem{DHT81}
Michel~C. Delfour, William Hager, and Fran\c{c}ois Trochu, \emph{Discontinuous
  {G}alerkin methods for ordinary differential equations}, Mathematics of
  Computation \textbf{36} (1981), 455--473.

\bibitem{DK14}
Alan Demlow and Natalia Kopteva, \emph{Maximum-norm a posteriori error
  estimates for singularly perturbed elliptic reaction-diffusion problems},
  Numerische Mathematik (2014), 1--36.

\bibitem{DLM09}
Alan Demlow, Omar Lakkis, and Charalambos Makridakis, \emph{A posteriori error
  estimates in the maximum norm for parabolic problems}, SIAM Journal on
  Numerical Analysis \textbf{47} (2009), no.~3, 2157--2176.

\bibitem{D82}
Todd Dupont, \emph{Mesh modification for evolution equations}, Mathematics of
  Computation \textbf{39} (1982), no.~159, 85--107. \MR{658215 (84g:65131)}

\bibitem{E95}
Donald Estep, \emph{A posteriori error bounds and global error control for
  approximation of ordinary differential equations}, SIAM Journal on Numerical
  Analysis \textbf{32} (1995), no.~1, 1--48.

\bibitem{EF94}
Donald Estep and Donald French, \emph{Global error control for the continuous
  {G}alerkin finite element method for ordinary differential equations}, ESAIM:
  Mathematical Modelling and Numerical Analysis \textbf{28} (1994), 815--852.

\bibitem{FV03}
Francesca Fierro and Andreas Veeser, \emph{On the a posteriori error analysis
  of equations of prescribed mean curvature}, Mathematics of Computation
  \textbf{72} (2003), 1611--1634.

\bibitem{GLW17}
Emmanuil~H. Georgoulis, Omar Lakkis, and Thomas~P. Wihler, \emph{A posteriori
  error bounds for fully-discrete $hp$-discontinuous {G}alerkin timestepping
  methods}, Numerische Mathematik (2021).

\bibitem{GM14}
Emmanuil~H. Georgoulis and Charalambos Makridakis, \emph{On a posteriori error
  control for the {A}llen-{C}ahn problem}, Mathematical Methods in the Applied
  Sciences \textbf{37} (2014), no.~2, 173--179.

\bibitem{HW18}
B\"{a}rbel Holm and Thomas~P. Wihler, \emph{Continuous and discontinuous
  {G}alerkin time stepping methods for nonlinear initial value problems with
  application to finite time blow-up}, Numer. Math. \textbf{138} (2018), no.~3,
  767--799. \MR{3767700}

\bibitem{Hu}
Bei Hu, \emph{Blow-up theories for semilinear parabolic equations.}, Lecture
  Notes in Mathematics, vol. 2018, Springer, Heidelberg, 2011.

\bibitem{HMR08}
Weizhang Huang, Jingtang Ma, and Robert~D. Russell, \emph{A study of moving
  mesh {PDE} methods for numerical simulation of blowup in reaction diffusion
  equations}, Journal of Computational Physics \textbf{227} (2008), no.~13,
  6532--6552.

\bibitem{J88}
Claes Johnson, \emph{Error estimates and adaptive time-step control for a class
  of one-step methods for stiff ordinary differential equations}, SIAM Journal
  on Numerical Analysis \textbf{25} (1988), 908--926.

\bibitem{KNS04}
Daniel Kessler, Ricardo~H. Nochetto, and Alfred Schmidt, \emph{A posteriori
  error control for the {A}llen-{C}ahn problem: circumventing {G}ronwall's
  inequality}, ESAIM: Mathematical Modelling and Numerical Analysis \textbf{38}
  (2004), no.~1, 129--142.

\bibitem{K15}
Natalia Kopteva, \emph{Maximum-norm a posteriori error estimates for singularly
  perturbed reaction-diffusion problems on anisotropic meshes}, SIAM Journal on
  Numerical Analysis \textbf{53} (2015), no.~6, 2519--2544.

\bibitem{KL16}
Natalia Kopteva and Torsten Linss, \emph{Maximum norm a posteriori error
  estimation for parabolic problems using elliptic reconstructions}, SIAM
  Journal on Numerical Analysis \textbf{51} (2016), no.~3, 1494--1524.

\bibitem{KL17}
\bysame, \emph{Improved maximum-norm a posteriori error estimates for linear
  and semilinear parabolic equations}, Advances in Computational Mathematics
  \textbf{43} (2017), no.~5, 999--1022.

\bibitem{K09}
Irene Kyza, \emph{A posteriori error estimates for approximations of semilinear
  parabolic and {S}chr{\"o}dinger-type equations}, Ph.D. thesis, 2009.

\bibitem{KM11}
Irene Kyza and Charalambos Makridakis, \emph{Analysis for time discrete
  approximations of blow-up solutions of semilinear parabolic equations}, SIAM
  Journal on Numerical Analysi \textbf{49} (2011), no.~1, 405--426.

\bibitem{IM17}
Irene Kyza and Stephen Metcalfe, \emph{Pointwise a posteriori error bounds for
  blow-up in the semilinear heat equation}, SIAM Journal on Numerical Analysis
  \textbf{58} (2020), no.~5, 2609--2631.

\bibitem{KMW16}
Irene Kyza, Stephen Metcalfe, and Thomas~P Wihler, \emph{$hp$-{A}daptive
  {G}alerkin time stepping methods for nonlinear initial value problems},
  Journal of Scientific Computing \textbf{75} (2018), no.~1, 111--127.

\bibitem{LM06}
Omar Lakkis and Charalambos Makridakis, \emph{Elliptic reconstruction and a
  posteriori error estimates for fully discrete linear parabolic problems},
  Mathematics of Computation \textbf{75} (2006), no.~256, 1627--1658.
  \MR{2240628 (2007e:65122)}

\bibitem{LN05}
Omar Lakkis and Ricardo~H. Nochetto, \emph{A posteriori error analysis for the
  mean curvature flow of graphs}, SIAM Journal on Numerical Analysis
  \textbf{42} (2005), 1875--1898.

\bibitem{L07}
Torsten Lin{\ss}, \emph{Maximum-norm error analysis of a non-monotone {FEM} for
  a singularly perturbed reaction-diffusion problem}, BIT Numerical Mathematics
  \textbf{47} (2007), no.~2, 379--391.

\bibitem{L14}
\bysame, \emph{A posteriori error estimation for arbitrary order {FEM} applied
  to singularly perturbed one-dimensional reaction-diffusion problems},
  Applications of Mathematics \textbf{59} (2014), no.~3, 241--256.

\bibitem{MN03}
Charalambos Makridakis and Ricardo~H. Nochetto, \emph{Elliptic reconstruction
  and a posteriori error estimates for parabolic problems}, SIAM Journal on
  Numerical Analysis \textbf{41} (2003), no.~4, 1585--1594. \MR{2034895
  (2004k:65157)}

\bibitem{MN06}
\bysame, \emph{A posteriori error analysis for higher order dissipative methods
  for evolution problems}, Numerische Mathematik \textbf{104} (2006), 489--514.

\bibitem{MZ98}
Frank Merle and Hatem Zaag, \emph{Optimal estimates for blowup rate and
  behavior for nonlinear heat equations}, Communications on pure and applied
  mathematics \textbf{51} (1998), no.~2, 139--196.

\bibitem{MZ00}
\bysame, \emph{A {L}iouville theorem for vector-valued nonlinear heat equations
  and applications}, Mathematische Annalen \textbf{316} (2000), no.~1,
  103--137.

\bibitem{M15}
Stephen Metcalfe, \emph{Adaptive discontinuous {G}alerkin methods for nonlinear
  parabolic problems}, Ph.D. thesis, 2015.

\bibitem{NZ16}
Van~Tien Nguyen and Hatem Zaag, \emph{Blow-up results for a strongly perturbed
  semilinear heat equation: Theoretical analysis and numerical method},
  Analysis \& PDE \textbf{9} (2016), no.~1, 229--257.

\bibitem{N06}
Ricardo~H. Nochetto, Alfred Schmidt, Kunibert~G. Siebert, and Andreas Veeser,
  \emph{Pointwise a posteriori error estimates for monotone semi-linear
  equations}, Numerische Mathematik \textbf{104} (2006), no.~4, 515--538.

\bibitem{SW19}
Lars Schmutz and Thomas~P. Wihler, \emph{The variable-order discontinuous
  {G}alerkin time stepping scheme for parabolic evolution problems is uniformly
  {$L^{\infty}$}-stable}, SIAM J. Numer. Anal. \textbf{57} (2019), no.~1,
  293--319. \MR{3907928}

\bibitem{SS00}
Dominik Sch\"{o}tzau and Christoph Schwab, \emph{Time discretization of
  parabolic problems by the {$hp$}-version of the discontinuous {G}alerkin
  finite element method}, SIAM J. Numer. Anal. \textbf{38} (2000), no.~3,
  837--875. \MR{1781206}

\bibitem{ScWi10}
Dominik Sch{\"o}tzau and Thomas~P. Wihler, \emph{A posteriori error estimation
  for {$hp$}-version time-stepping methods for parabolic partial differential
  equations}, Numerische Mathematik \textbf{115} (2010), no.~3, 475--509.
  \MR{2640055 (2012c:65165)}

\bibitem{S17}
Iain Smears, \emph{Robust and efficient preconditioners for the discontinuous
  {G}alerkin time-stepping method}, IMA Journal of Numerical Analysis
  \textbf{37} (2017), no.~4, 1961--1985.

\bibitem{SF90}
Andrew~M. Stuart and Michael~S. Floater, \emph{On the computation of blow-up},
  European Journal of Applied Mathematics \textbf{1} (1990), 47--71.

\bibitem{T84}
Vidar Thom{\'e}e, \emph{Galerkin finite element methods for parabolic
  problems}, vol. 1054, Springer, 1984.

\bibitem{WY20}
Yichen Wei and Lijun Yi, \emph{An $hp$-version of the ${C}0$-continuous
  {P}etrov-{G}alerkin time stepping method for nonlinear second-order initial
  value problems}, Advances in Computational Mathematics \textbf{46} (2020),
  no.~4, 1--25.

\end{thebibliography}
\end{document}